
\documentclass[12pt,reqno]{amsart}
\usepackage{amsmath,amssymb,amsthm,amscd,amsfonts,geometry,color}
\geometry{hmargin=2cm, tmargin=2.5cm, bmargin=2cm}

\renewcommand{\labelenumi}{\rm(\theenumi)}

\newtheorem{theorem}{Theorem}[section]

\newtheorem{lemma}[theorem]{Lemma}
\newtheorem{proposition}[theorem]{Proposition}

\newtheorem{main}{Main Theorem}

\theoremstyle{definition}

\newtheorem{remark}[theorem]{Remark}





\newcommand{\0}{\mathbf{0}}                       







\newcommand{\fin}{\operatorname{Fin}}             
\newcommand{\doubl}{\operatorname{D}}             


\newcommand{\conti}{\operatorname{C}}             
\newcommand{\pseudo}{\operatorname{PM}}             
\newcommand{\metr}{\operatorname{M}}             
\newcommand{\adm}{\operatorname{AM}}             




\begin{document}

\title[Isometries between spaces of metrics]{Isometries between spaces of metrics}
\author{Katsuhisa Koshino}
\address[Katsuhisa Koshino]{Faculty of Engineering, Kanagawa University, 3-27-1 Rokkakubashi, Kanagawa-ku, Yokohama-shi, 221-8686, Japan}
\email{ft160229no@kanagawa-u.ac.jp}
\subjclass[2020]{Primary 46B04; Secondary 46E15, 54C35, 54E35}
\keywords{isometric, pseudometric, admissible metric, sup-metric, the Banach-Stone theorem}
\maketitle

\begin{abstract}
Given a metrizable space $Z$, denote by $\pseudo(Z)$ the space of continuous bounded pseudometrics on $Z$,
 and denote by $\adm(Z)$ the one of continuous bounded admissible metrics on $Z$,
 the both of which are equipped with the sup-norm $\|\cdot\|$.
Let ${\rm Pc}(Z)$ be the subspace of $\adm(Z)$ satisfying the following:
\begin{itemize}
 \item for every $d \in {\rm Pc}(Z)$, there exists a compact subset $K \subset Z$ such that if $d(x,y) = \|d\|$,
 then $x, y \in K$.
\end{itemize}
Moreover, set
 $${\rm Pp}(Z) = \{d \in \adm(Z) \mid \text{ there only exists } \{z,w\} \subset Z \text{ such that } d(z,w) = \|d\|\},$$
 and let $\metr(Z)$ be ${\rm Pc}(Z)$ or ${\rm Pp}(Z)$.
In this paper, we shall prove the Banach-Stone type theorem on spaces of metrics,
 that is, for metrizable spaces $X$ and $Y$, the following are equivalent:
 \begin{enumerate}
  \item $X$ and $Y$ are homeomorphic;
  \item there exists a surjective isometry $T : \pseudo(X) \to \pseudo(Y)$ with $T(\metr(X)) = \metr(Y)$;
  \item there exists a surjective isometry $T : \adm(X) \to \adm(Y)$ with $T(\metr(X)) = \metr(Y)$;
  \item there exists a surjective isometry $T : \metr(X) \to \metr(Y)$.
 \end{enumerate}
Then for each surjective isometry $T : \pseudo(X) \to \pseudo(Y)$ with $T(\metr(X)) = \metr(Y)$, there is a homeomorphism $\phi : Y \to X$ such that for any $d \in \pseudo(X)$ and for any $x, y \in Y$, $T(d)(x,y) = d(\phi(x),\phi(y))$.
Except for the case where the cardinality of $X$ or $Y$ is equal to $2$, the homeomorphism $\phi$ can be chosen uniquely.
\end{abstract}

\section{Introduction}

Isometries between function spaces have been studied in functional analysis.
The Banach-Stone theorem \cite{Banac,MHSto} is one of the most important results among those research,
 and its developments have been obtained until now, refer to \cite{FJ} as a historical note.
Throughout the paper, an isometry means a surjective isometry.
For a metrizable space $Z$, let $\conti(Z)$ be the space of continuous bounded real-valued functions on $Z$ with the sup-norm $\|\cdot\|$: for any $f \in \conti(Z)$, $\|f\| = \sup\{|f(z)| \mid z \in Z\}$.
Denote the positive cone by $\conti_+(Z) \subset \conti(Z)$.
Recently, L.~Sun, Y.~Sun and D.~Dai \cite{SSD} showed the Banach-Stone type theorem on positive cones of continuous function spaces as follows:

\begin{theorem}
Suppose that $X$ and $Y$ are compact metrizable spaces.
Then $X$ and $Y$ are homeomorphic if and only if $\conti_+(X)$ and $\conti_+(Y)$ are isometric.
\end{theorem}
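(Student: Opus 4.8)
The plan is to treat the two implications separately, the forward one being routine and the converse carrying all the weight.

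For the \emph{easy direction}, suppose $\phi : X \to Y$ is a homeomorphism. Then the composition operator $g \mapsto g \circ \phi$ is a surjective linear isometry of $\conti(Y)$ onto $\conti(X)$ (surjectivity and norm preservation both follow from $\phi$ being a homeomorphism of compacta), and it clearly carries $\conti_+(Y)$ onto $\conti_+(X)$. Restricting it to the positive cones yields a surjective isometry $\conti_+(Y) \to \conti_+(X)$, so the cones are isometric. I would dispose of this in a line.

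For the \emph{converse}, let $T : \conti_+(X) \to \conti_+(Y)$ be a surjective isometry; the goal is to manufacture a homeomorphism $X \cong Y$. The first key point is that $\conti_+(X)$ is a closed convex cone with \emph{nonempty interior} in $\conti(X)$: since $X$ is compact, any $f$ with $\min_X f > 0$ has a whole $\min_X f$-ball about it inside $\conti_+(X)$, so $f$ is interior. This lets me invoke Mankiewicz's theorem, which asserts that a surjective isometry between convex subsets with nonempty interior of normed spaces extends uniquely to a surjective \emph{affine} isometry. Thus $T$ extends to a surjective affine isometry $\widetilde T : \conti(X) \to \conti(Y)$. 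Next I would linearize and apply the classical Banach--Stone theorem. The apex $\0$ is affinely distinguished as the unique extreme point of $\conti_+(X)$ --- every nonzero $f$ is the nontrivial convex combination $\tfrac12(2f) + \tfrac12\,\0$, hence not extreme, while $\0$ itself is extreme --- so the affine bijection $\widetilde T$ must send $\0_X$ to $\0_Y$ and is therefore linear. Now $\widetilde T$ is a surjective linear isometry of $\conti(X)$ onto $\conti(Y)$ with $\widetilde T(\conti_+(X)) = \conti_+(Y)$, and the Banach--Stone theorem provides a homeomorphism $\psi : Y \to X$ (together with a unimodular weight that positivity of $\widetilde T$ forces to be identically $1$, though its value is irrelevant here). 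The existence of $\psi$ already gives $X \cong Y$, completing the proof.

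The main obstacle is establishing \emph{affineness} of $T$, since a priori $T$ is only a metric map on a set carrying no linear structure of its own; Mankiewicz's theorem is exactly the tool that bridges this gap, and verifying its hypothesis (nonempty interior, which uses compactness of $X$) is the essential check. If one wished to avoid citing Mankiewicz, the alternative is to show directly that $T$ preserves metric midpoints and to run a V\"ais\"al\"a-style proof of the Mazur--Ulam theorem adapted to the convex cone; here the failure of strict convexity of $\|\cdot\|$ must be absorbed by the reflection device in that proof, and ensuring that the reflections stay inside the cone is the delicate point I expect to fight.
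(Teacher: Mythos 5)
Your proof is correct, but it takes a genuinely different route from the one behind this theorem in the sources the paper builds on. (Note that the paper itself does not prove this statement: it quotes it from \cite{SSD}, and the method there --- as in its non-compact generalization \cite{HMM} and in this paper's own Main Theorem --- is a ``peaking function'' argument in the tradition of Stone: attach to each point, or pair of points, the family of cone elements whose norm peaks there, show that a surjective isometry must permute these families, and read a homeomorphism off that correspondence.) Your route is instead: $\conti_+(X)$ is convex with nonempty interior in $\conti(X)$ (the constant function $1$ is an interior point), so Mankiewicz's theorem extends $T$ to a surjective affine isometry $\widetilde{T} : \conti(X) \to \conti(Y)$; the apex $\0$ is the unique extreme point of each cone and affine bijections preserve extreme points, so $\widetilde{T}(\0) = \0$ and $\widetilde{T}$ is linear; the classical Banach--Stone theorem then yields the homeomorphism, with positivity forcing the unimodular weight to be identically $1$. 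Each of these steps is valid, so as a proof of the stated compact-case theorem your argument is sound and considerably shorter. What the peaking-function machinery buys, and why the paper cannot take your shortcut: first, Mankiewicz's theorem needs nonempty interior, and the cones this paper actually studies, $\pseudo(X) \subset \conti(X^2)$, have empty interior (adding a small positive constant to a pseudometric destroys vanishing on the diagonal), so your first step is unavailable in the paper's setting; second, \cite{SSD} and \cite{HMM} in fact treat \emph{phase}-isometries, which are not isometries, so no extension theorem of Mankiewicz type applies there; third, in the non-compact case your final step degrades --- Banach--Stone applied to $C_b(X) \cong C(\beta X)$ only produces a homeomorphism $\beta X \cong \beta Y$, from which recovering $X \cong Y$ requires further argument --- whereas the peaking-function approach produces a homeomorphism of $X$ with $Y$ directly, together with the canonical formula $(\ast)$ describing the isometry.
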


D.~Hirota, I.~Matsuzaki and T.~Miura \cite{HMM} generalized the above theorem in the non-compact case.
In this paper, we shall establish the Banach-Stone type theorem on spaces of metrics.
Let $\pseudo(Z) \subset \conti_+(Z^2)$ be the subspace consisting of continuous bounded pseudometrics on $Z$,
 and let $\adm(Z) \subset \pseudo(Z)$ be the subspace consisting of continuous bounded admissible metrics.
As is easily observed,
 $\pseudo(X)$ and $\pseudo(Y)$ (respectively, $\adm(X)$ and $\adm(Y)$) are isometric if metrizable spaces $X$ and $Y$ are homeomorphic.
When $X$ and $Y$ are compact,
 M.E.~Shanks \cite{Sh} showed the converse of it and established the following:

\begin{theorem}
Let $X$ and $Y$ be compact metrizable spaces.
The following are equivalent:
\begin{enumerate}
 \item $X$ and $Y$ are homeomorphic;
 \item $\pseudo(X)$ and $\pseudo(Y)$ are isometric;
 \item $\adm(X)$ and $\adm(Y)$ are isometric.
\end{enumerate}
\end{theorem}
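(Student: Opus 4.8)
The plan is to prove the two easy implications by an explicit pullback and then to concentrate on recovering a homeomorphism from an abstract isometry. For (1)$\Rightarrow$(2) and (1)$\Rightarrow$(3), if $h : X \to Y$ is a homeomorphism then $h^{*} : \pseudo(Y) \to \pseudo(X)$, defined by $h^{*}(d)(a,b) = d(h(a),h(b))$, is a surjective linear isometry (compactness makes every continuous pseudometric automatically bounded, and $\|h^{*}(d)\| = \|d\|$ because $h\times h$ is a homeomorphism of $Y^2$ onto $X^2$). Since $h$ and $h^{-1}$ are continuous, $h^{*}$ restricts to a bijection $\adm(Y) \to \adm(X)$, which gives both converses simultaneously. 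This is the ``easily observed'' half; the substance is (2)$\Rightarrow$(1) and (3)$\Rightarrow$(1), which I would treat by a single reconstruction scheme applied inside whichever cone is given.

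For the converse I would first pin down as much linear structure of $T$ as the sup-norm permits. Both $\pseudo(Z)$ and $\adm(Z)$ are convex cones, but the sup-norm is not strictly convex, so Mazur--Ulam/Mankiewicz does not apply and $T$ is not affine a priori. The first step is to characterize the zero pseudometric purely metrically (as the vertex of the cone, in the manner used for the positive cones $\conti_+(Z)$ in \cite{SSD,HMM}) so that $T(0)=0$, and then to prove $T$ positively homogeneous and additive. The engine here is the elementary identity that $\|d+e\| = \|d\| + \|e\|$ holds \emph{if and only if} the sets of pairs realizing the diameters of $d$ and of $e$ intersect; combined with an analysis of metric segments, this is what forces $T$ to respect the cone operations. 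I expect this to be \emph{the main technical obstacle}, precisely because the lack of strict convexity means additivity must be extracted from the combinatorics of diameter-attaining pairs rather than from a midpoint argument.

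Once $T$ is a cone isometry, I would reconstruct $Z$ from the distinguished (pseudo)metrics that attain their diameter at a \emph{single} unordered pair of points. Each such metric names an unordered pair $\{z,w\}$ of distinct points of $Z$, the relation $\|d+e\|=\|d\|+\|e\|$ records when two such pairs meet, and the points of $Z$ are then recovered as the maximal families of pairwise-meeting pairs (equivalently, as the classes forcing a common vertex). Since every pair of distinct points carries such a metric --- using compactness to guarantee the diameter is attained, and perturbing a fixed admissible metric to make the attaining pair unique, which keeps us inside $\adm(Z)\subset\pseudo(Z)$ --- this recovers the underlying set in both cases (for (2) no separate characterization of $\adm(X)$ is needed, as the peak metrics already live in $\pseudo(X)$). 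Then $T$ transports the whole incidence structure to $Y$, yielding a bijection $\phi : Y \to X$ with $T(d)(x,y) = d(\phi(x),\phi(y))$.

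The final, and genuinely delicate, step is to show that $\phi$ is a homeomorphism, i.e. that the metric data on the cone actually sees the topology of $Z$. Here I would show that one can localize: for a point $z$ and a neighborhood $U$ there exist admissible metrics whose unique diameter pair lies in $U\times U$ near $z$, and small sup-perturbations move the attaining pair only slightly, so that $z_n \to z$ is detected by convergence of the associated peak metrics. Proving that both $\phi$ and $\phi^{-1}$ are continuous in this fashion is where compactness and the abundance of admissible metrics are indispensable, and it is what turns the combinatorial reconstruction into a genuine topological equivalence; I would expect this continuity argument, alongside additivity, to absorb most of the work.
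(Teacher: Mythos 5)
Your skeleton (pullback isometries for the easy directions, $T(\0)=\0$ and norm-preservation, metrics peaking at a single pair naming doubletons, an incidence structure transported by $T$, then a point map and continuity) is exactly the architecture of the paper's peaking-function proof of its Main Theorem, which subsumes the compact statement since $\adm(Z)={\rm Pc}(Z)$ and every continuous function on $Z^2$ attains its supremum when $Z$ is compact (your observation that compactness supplies norm-attainment is the right substitute for the paper's ${\rm Pc}$ hypothesis). But there is a genuine gap at the step you yourself flag as the main obstacle: additivity of $T$. You offer no argument beyond ``analysis of metric segments,'' and none is available --- a surjective isometry between convex cones in a sup-normed space need not be affine a priori, and the identity ``$\|d+e\|=\|d\|+\|e\|$ iff the peak sets meet'' (true under compactness) gives no handle on $T(d+e)$ versus $T(d)+T(e)$. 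The decisive point you are missing is that additivity is never needed. The paper transports peaks under sums using only the isometry and Proposition~\ref{norm}: if $d_1,\dots,d_n$ all peak at $\{x,y\}$ and $d=\sum_{i=1}^n d_i$, pick (by compactness) $\{z,w\}$ with $T(d)(z,w)=\|T(d)\|=\|d\|$; since $d-d_i$ also peaks at $\{x,y\}$, one has $\|T(d)-T(d_i)\|=\|d-d_i\|=\|d\|-\|d_i\|$, whence $T(d_i)(z,w)\geq \|d\|-(\|d\|-\|d_i\|)=\|T(d_i)\|$, so every $T(d_i)$ peaks at $\{z,w\}$ (Lemmas~\ref{sum} and~\ref{f.i.p.}). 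That inequality chain, a finite-intersection-property argument in the compact hyperspace (Lemma~\ref{intersect.}), and a uniqueness-of-peaks lemma (Lemma~\ref{peak}) produce the bijection on doubletons directly; your route through additivity would stall exactly where you predicted.

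A second gap: recovering points as ``maximal families of pairwise-meeting pairs'' fails combinatorially, because the triangle $\{\{a,b\},\{b,c\},\{a,c\}\}$ is a maximal pairwise-intersecting family of doubletons with no common vertex (and for a $3$- or $4$-point space it has the same size as a star), so incidence alone does not name points --- and finite discrete spaces are compact metrizable, so they are within the scope of the statement. The paper excludes the triangle configuration metrically: it first derives the identity $T(d)(x,y)=d(\Phi(\{x,y\}))$ for \emph{every} $d\in\pseudo(X)$ (Proposition~\ref{eq.}, again by an isometry inequality-chain rather than additivity), and then, were $\Phi(\{y,z\})$ to avoid the common point of $\Phi(\{y,z_1\})$ and $\Phi(\{y,z_2\})$, a pseudometric taking values $3,1,1$ on the relevant pairs (Theorem~\ref{ext.}) would contradict the triangle inequality, giving $3\leq 2$ (Lemma~\ref{singleton}). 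Note also that you \emph{assert} the formula $T(d)(x,y)=d(\phi(x),\phi(y))$ as an output of the reconstruction, but it is exactly Proposition~\ref{eq.} plus Proposition~\ref{corresp.} and must be proved; once it is in hand, continuity of $\phi$ is a short argument (take $d$ with $d(\phi(y),x)=2$ for $x\in X\setminus U$ and pull back a ball via $T(d)$), not the delicate localization you anticipate. In sum: right framework and correct easy half, but the two load-bearing steps --- transporting peaks without additivity, and the metric exclusion of triangles in point recovery --- are missing, and the step you centered the proof on is a detour that would fail.
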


Shanks focused on certain lattice structure on equivalent classes of $\pseudo(X)$,
 which determined the topology of $X$,
 and the method was different from those of S.~Banach and M.H.~Stone.
However, Shanks did not give descriptions of isometries by using homeomorphisms like the canonical formula ($\ast$) as in Main Theorem,
 which appeared in the Banach-Stone theorem.
Set ${\rm Pc}(Z)$ be the subspace of $\adm(Z)$ that satisfies the following condition:
\begin{itemize}
 \item for every $d \in {\rm Pc}(Z)$, there is a compact set $K$ in $Z$ such that if $d(x,y) = \|d\|$,
 then $x, y \in K$.
\end{itemize}
Notice that $\adm(Z) = {\rm Pc}(Z)$ when $Z$ is compact.
Moreover, put
 $${\rm Pp}(Z) = \{d \in \adm(Z) \mid \text{ there only exists } \{z,w\} \subset Z \text{ such that } d(z,w) = \|d\|\},$$
 and let $\metr(Z)$ be ${\rm Pc}(Z)$ or ${\rm Pp}(Z)$.
We shall generalize this result and determine isometries by homeomorphisms as follows:

\begin{main}\label{isometry}
Let $X$ and $Y$ be metrizable spaces.
The following are equivalent:
\begin{enumerate}
 \item $X$ and $Y$ are homeomorphic;
 \item there exists an isometry $T : \pseudo(X) \to \pseudo(Y)$ with $T(\metr(X)) = \metr(Y)$;
 \item there exists an isometry $T : \adm(X) \to \adm(Y)$ with $T(\metr(X)) = \metr(Y)$;
 \item there exists an isometry $T : \metr(X) \to \metr(Y)$.
\end{enumerate}
In this case, for each isometry $T : \pseudo(X) \to \pseudo(Y)$ with $T(\metr(X)) = \metr(Y)$, there exists a homeomorphism $\phi : Y \to X$ such that for any $d \in \pseudo(X)$ and for any $x, y \in Y$,
\begin{equation}
 T(d)(x,y) = d(\phi(x),\phi(y)). \tag{$\ast$}
\end{equation}
Except for the case where the cardinality of $X$ or $Y$ is equal to $2$, the homeomorphism $\phi$ can be chosen uniquely.
\end{main}

\section{Spaces of metrics}

In this section, we shall review the study on spaces of metrics.
Recently, Y.~Ishiki \cite{Ish1,Ish2,Ish7} have researched topologies of spaces of metrics.
The author \cite{Kos20,Kos26} investigated their Borel hierarchy, complete metrizability and topological types,
 and proved the following:

\begin{theorem}
Let $\kappa$ be a cardinal and $\ell_2(\kappa)$ be the Hilbert space of density $\kappa$.
When a metrizable space $Z$ is of density $\kappa$,
 the space $\pseudo(Z)$ is homeomorphic to
 \begin{enumerate}
 \renewcommand{\labelenumi}{(\roman{enumi})}
  \item $[0,1)^{\kappa(\kappa - 1)/2}$ if $Z$ is finite;
  \item $\ell_2(2^{< \kappa})$ if $Z$ is infinite and generalized compact;
  \item $\ell_2(2^\kappa)$ if $Z$ is not generalized compact.
 \end{enumerate}
Additionally, when $Z$ is infinite and $\sigma$-compact,
 the subspace $\adm(Z)$ is homeomorphic to
 \begin{enumerate}
 \renewcommand{\labelenumi}{(\roman{enumi})}
  \item $\ell_2(\aleph_0)$ if $Z$ is compact;
  \item $\ell_2(2^{\aleph_0})$ if $Z$ is not compact.
 \end{enumerate}
\end{theorem}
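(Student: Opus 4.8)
The plan is to separate the cases $Z$ finite and $Z$ infinite, the dividing line being exactly local compactness of $\pseudo(Z)$, and in each case to read off the homeomorphism type from the theory of convex sets in Fr\'echet spaces. The crucial structural fact is convexity: a convex combination of pseudometrics is again a pseudometric, and each pseudometric axiom survives uniform limits, so $\pseudo(Z)$ is a \emph{closed convex} subset of the Banach space $\conti(Z^2)$; likewise $\adm(Z)$ is convex, since mixing an admissible metric with any pseudometric keeps it admissible. The governing tool is then the Dobrowolski--Toru\'nczyk classification of convex sets: a non-locally-compact, completely metrizable convex subset of a Fr\'echet space is homeomorphic to $\ell_2(\kappa)$, where $\kappa$ is its density character.

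When $Z$ is finite with $n$ points, continuity and boundedness are automatic, so $\pseudo(Z)$ is the full metric cone in $\R^{n(n-1)/2}$, a pointed full-dimensional polyhedral convex cone. I would compute its type through its link $L = \pseudo(Z) \cap S^{n(n-1)/2-1}$: because the cone is pointed and full-dimensional, $L$ is a compact spherically convex cell, hence a disk $D^{n(n-1)/2-1}$, and $\pseudo(Z)$ is the unbounded cone over $L$. Since the nonnegative orthant is itself the unbounded cone over a spherical simplex, which is also such a disk, a radial homeomorphism of links gives $\pseudo(Z) \cong [0,\infty)^{n(n-1)/2} \cong [0,1)^{n(n-1)/2}$, which is case (i). No admissibility statement is needed in this regime.

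For infinite $Z$ the space $\pseudo(Z)$ is never locally compact, as it carries arbitrarily high-dimensional families of linearly independent pseudometrics, so no point has a compact neighbourhood; being closed and convex in $\conti(Z^2)$ it is completely metrizable. The Dobrowolski--Toru\'nczyk theorem therefore yields $\pseudo(Z) \cong \ell_2(\kappa')$ with $\kappa' = \dens \pseudo(Z)$, and the entire remaining content is a cardinal computation. I would bound $\kappa'$ below by building, from a closed discrete subset of size $\lambda$, a $2^{\lambda}$-separated family of bump pseudometrics $d_f(x,y) = |f(x)-f(y)|$, and bound it above by uniform approximation on $Z^2$; this shows $\kappa' = 2^{<\kappa}$ precisely when $Z$ is generalized compact (its closed discrete subsets all have cardinality $<\kappa$) and $\kappa' = 2^{\kappa}$ otherwise, which are cases (ii) and (iii).

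For $\adm(Z)$ with $Z$ infinite and $\sigma$-compact the same scheme applies, but the extra ingredient is that $\adm(Z)$, though not closed, is an absolute $G_\delta$: using a countable compact exhaustion, admissibility of $d$ can be written as a countable intersection of open conditions, as in the Borel-hierarchy analysis of \cite{Kos20}. Hence $\adm(Z)$ is completely metrizable, convex, and non-locally-compact, so Dobrowolski--Toru\'nczyk again gives $\adm(Z) \cong \ell_2(\dens \adm(Z))$, and one checks the density equals $\aleph_0$ when $Z$ is compact and $2^{\aleph_0}$ when $Z$ is non-compact $\sigma$-compact. I expect the main obstacles to be exactly these two technical points: verifying that admissibility is genuinely $G_\delta$ for $\sigma$-compact $Z$, and pinning down the exact cardinal $2^{<\kappa}$ versus $2^{\kappa}$; by contrast, convexity makes the absolute-retract and characterization hypotheses of the classification essentially automatic.
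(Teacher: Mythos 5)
A preliminary remark on the comparison itself: the paper contains \emph{no proof} of this statement. It is quoted as background, with the proof deferred to the author's earlier papers \cite{Kos20,Kos26}, so your plan can only be measured against the method of those works. Your route is essentially that one: realize $\pseudo(Z)$ as a closed convex subset of the Banach space $\conti(Z^2)$, handle the finite case by hand as a polyhedral cone, and in the infinite case invoke the classification of completely metrizable, non-locally compact convex subsets of Fr\'echet spaces as Hilbert spaces, reducing everything to a density computation; for $\adm(Z)$, replace closedness by a $G_\delta$ argument. Within this scheme, your finite case is correct (a pointed, full-dimensional closed convex cone in $\R^N$ is the cone over a compact convex base homeomorphic to $D^{N-1}$, hence homeomorphic to $[0,\infty)^N \cong [0,1)^N$), and convexity of $\adm(Z)$ follows, as you say, from the fact that the sum of an admissible metric and a pseudometric is admissible (Lemma~\ref{add.}).

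There is, however, one genuinely invalid step: your justification of non-local compactness. From ``arbitrarily high-dimensional families of linearly independent pseudometrics'' one cannot conclude that no point has a compact neighbourhood; the Hilbert cube is a compact, infinite-dimensional convex subset of $\ell_2$ containing linearly independent families of arbitrarily large (even infinite) cardinality. Since non-local compactness is a hypothesis of the classification theorem you invoke, it must be proved directly: show that every ball about a point $d \in \pseudo(Z)$ contains an infinite $\epsilon$-separated set, e.g.\ the pseudometrics $d + \rho_n$ with $\rho_n(x,y) = |f_n(x) - f_n(y)|$, where the $f_n$ are bump functions of height $\epsilon$ concentrated at the terms of an infinite closed discrete subset or of a nontrivial convergent sequence of $Z$ (one of the two exists in any infinite metrizable space); then $\|(d+\rho_n) - (d+\rho_m)\| \geq \epsilon$ for $n \neq m$. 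Beyond this, note that what you yourself call ``the entire remaining content''---the upper bound $\dens \pseudo(Z) \leq 2^{<\kappa}$ in the generalized compact case, and the verification that admissibility is a countable intersection of sup-norm-open conditions when $Z$ is $\sigma$-compact---is precisely the substance of \cite{Kos20,Kos26} and is only sketched in your proposal; your $G_\delta$ idea is the right one (fix an admissible $\rho$ and a compact exhaustion $Z = \bigcup_n K_n$, and observe that each set $\{d : \inf\{d(z,x) : z \in K_n,\ \rho(z,x) \geq \epsilon\} > 0\}$ is open), but the cardinal arithmetic separating $2^{<\kappa}$ from $2^{\kappa}$ is left open, and the non-separable form of the convex-set classification you attribute to Dobrowolski--Toru\'nczyk should be verified with a precise reference before the argument can be considered complete.
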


This means that for metrizable spaces $X$ and $Y$, even if $\pseudo(X)$ and $\pseudo(Y)$ (respectively, $\adm(X)$ and $\adm(Y)$) are homeomorphic,
 $X$ and $Y$ are not necessarily homeomorphic.
On metric structures of spaces of metrics, Y.~Ishiki and the author \cite{IK} studied their isometric universality.

As a basic property on metrics, we have the following, see \cite[Lemma~2.1]{IK}.

\begin{lemma}\label{add.}
For a metrizable space $Z$, for every $d \in \pseudo(Z)$ and every $\rho \in \adm(Z)$, their sum $d + \rho \in \adm(Z)$.
\end{lemma}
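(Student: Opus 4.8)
The plan is to verify directly that $d+\rho$ meets the definition of an element of $\adm(Z)$: a continuous bounded metric whose induced topology is that of $Z$. The metric axioms and the analytic conditions are immediate. Non-negativity and symmetry are inherited from $d$ and $\rho$, and the triangle inequality for $d+\rho$ follows by adding the triangle inequalities for $d$ and for $\rho$; thus $d+\rho$ is at least a pseudometric. Boundedness follows from $\|d+\rho\| \le \|d\| + \|\rho\| < \infty$, and continuity of $d+\rho$ as a function on $Z^2$ follows because it is a sum of two continuous functions. The only metric axiom that can fail for a general pseudometric is non-degeneracy, and I would recover it from $\rho$: if $(d+\rho)(x,y)=0$, then, since both summands are non-negative, $\rho(x,y)=0$, whence $x=y$ because $\rho$ is a metric. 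Hence $d+\rho$ is a continuous bounded metric.

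The substantive point is admissibility, i.e.\ that the metric topology $\tau_{d+\rho}$ coincides with the given topology $\tau_Z$ of $Z$, and I would establish this by proving the two inclusions separately. For $\tau_Z \subseteq \tau_{d+\rho}$, I use the pointwise inequality $\rho \le d+\rho$, valid since $d \ge 0$. This yields the ball containment $B_{d+\rho}(x,r) \subseteq B_\rho(x,r)$ for every $x \in Z$ and every $r>0$, so each $\rho$-open set is also $(d+\rho)$-open; since $\rho \in \adm(Z)$ means $\tau_\rho = \tau_Z$, we get $\tau_Z = \tau_\rho \subseteq \tau_{d+\rho}$. For the reverse inclusion $\tau_{d+\rho} \subseteq \tau_Z$, I use the continuity noted above: for fixed $x$, the map $y \mapsto (d+\rho)(x,y)$ is continuous on $(Z,\tau_Z)$, so each ball $B_{d+\rho}(x,r)$ is $\tau_Z$-open, whence every $\tau_{d+\rho}$-open set is $\tau_Z$-open. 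Combining the two inclusions gives $\tau_{d+\rho}=\tau_Z$, so $d+\rho$ is admissible and therefore lies in $\adm(Z)$.

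There is no serious obstacle here, as the lemma is a structural observation; the one place meriting a moment of care is to avoid conflating the two topological inclusions. The inequality $\rho \le d+\rho$ by itself only shows that $\tau_{d+\rho}$ is finer than $\tau_Z$, and it is precisely the continuity of $d+\rho$ with respect to $\tau_Z$ that prevents the enlarged metric from refining the topology strictly. This is exactly why the continuity built into the definitions of $\pseudo(Z)$ and $\adm(Z)$ is indispensable: dropping it would allow $d$ to be a badly discontinuous pseudometric whose sum with $\rho$ generates a strictly finer topology, so that $d+\rho$ would fail to be admissible.
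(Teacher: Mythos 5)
Your proof is correct. The paper itself gives no argument for this lemma, citing it as \cite[Lemma~2.1]{IK}; your verification --- non-degeneracy inherited from $\rho$, the inclusion $\tau_Z \subseteq \tau_{d+\rho}$ from the pointwise bound $\rho \le d+\rho$, and the reverse inclusion from the $\tau_Z$-continuity of $d+\rho$ making each ball $B_{d+\rho}(x,r)$ open --- is exactly the standard argument behind that citation, and you correctly isolate the one subtle point, namely that continuity (not the inequality) is what rules out $d+\rho$ generating a strictly finer topology.
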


F.~Hausdorff \cite{Hau} showed the metric extension theorem,
 which states that for every metrizable space $Z$ and its closed subset $A \subset Z$, any $d \in \adm(X)$ can be extended over $X$.
We may obtain the pseudometric version of it,
 that is preserving their norms, as follows, refer to \cite{Ish8} for example.

\begin{theorem}\label{ext.}
Suppose that $Z$ is a metrizable space and $A \subset Z$ is a closed subset.
For each $d \in \pseudo(A)$, there is $\tilde{d} \in \pseudo(Z)$ such that $\tilde{d}|_{A^2} = d$ and $\|\tilde{d}\| = \|d\|$.
\end{theorem}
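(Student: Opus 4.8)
The plan is to first extend $d$ to \emph{some} continuous bounded pseudometric on all of $Z$, ignoring the norm, and then to repair the norm by a truncation. Write $M = \|d\|$. Observe that for any pseudometric $\rho$ on a set and any constant $c > 0$, the truncation $\min(\rho, c)$ is again a pseudometric: symmetry and vanishing on the diagonal are clear, and the triangle inequality $\min(\rho(x,z),c) \le \min(\rho(x,y),c) + \min(\rho(y,z),c)$ is verified by cases according to whether one of $\rho(x,y),\rho(y,z)$ is $\ge c$ (then the right-hand side is $\ge c$) or both are $< c$ (then the right-hand side equals $\rho(x,y)+\rho(y,z) \ge \rho(x,z)$). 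Consequently, if $d_0 \in \pseudo(Z)$ extends $d$, then $\tilde{d} := \min(d_0, M) \in \pseudo(Z)$ as well, it satisfies $\tilde{d} \le M$ so that $\|\tilde{d}\| \le M$, and on $A^2$ it agrees with $\min(d,M) = d$ because $d \le M$ there. Since $\tilde{d}$ extends $d$ we also have $\|\tilde{d}\| \ge \|d\| = M$, whence $\|\tilde{d}\| = \|d\|$, as required.

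It therefore remains to produce one continuous bounded extension $d_0 \in \pseudo(Z)$. For this I would use a Kuratowski-type embedding. Define $\iota : A \to \conti(A)$ by $\iota(a) = d(a,\cdot)$; each $\iota(a)$ lies in $\conti(A)$ and $\|\iota(a)\| = \sup_{x \in A} d(a,x) \le M$, so $\iota(A)$ is contained in the closed ball of radius $M$ about the origin. The triangle inequality for $d$ gives $\|\iota(a) - \iota(a')\| = \sup_{x \in A} |d(a,x) - d(a',x)| = d(a,a')$, so $\iota$ is continuous (as $d$ is) and transports $d$ to the norm metric of $\conti(A)$. Since $Z$ is metrizable, $A$ is closed, and $\conti(A)$ is a Banach space (in particular, a locally convex space), Dugundji's extension theorem yields a continuous map $F : Z \to \conti(A)$ with $F|_A = \iota$ and $F(Z) \subset \conv(\iota(A))$. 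That ball being convex and containing $\iota(A)$, we obtain $F(Z) \subset \conv(\iota(A))$ inside the closed ball of radius $M$. Now set $d_0(x,y) = \|F(x) - F(y)\|$. As the pullback of the norm metric along $F$ it is a pseudometric on $Z$; it is continuous because $F$ and the norm are; it is bounded by $2M$; and on $A^2$ we have $d_0(a,a') = \|\iota(a) - \iota(a')\| = d(a,a')$, so $d_0$ extends $d$. Applying the truncation above to $d_0$ then finishes the construction.

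The routine verifications (that $\iota$ is an isometric-type embedding, that the pullback $d_0$ is a pseudometric, and that truncation preserves the axioms) are all short. The only genuinely nontrivial ingredient is Dugundji's extension theorem, and the point that must be handled with care is boundedness: a naive continuous extension of $\iota$ need not have bounded image, which is precisely why I invoke the convex-hull refinement of Dugundji's theorem, since keeping $F(Z)$ inside $\conv(\iota(A))$ forces it into the ball of radius $M$ and hence makes $d_0$ bounded. (Alternatively, one could extend $\iota$ by the plain Tietze--Dugundji theorem and then compose with the radial retraction onto that ball, which is continuous and fixes $\iota(A)$.) With boundedness secured, the truncation at $M$ is exactly what simultaneously restores the correct norm and leaves the values on $A^2$ untouched, delivering an extension $\tilde{d}$ with $\tilde{d}|_{A^2} = d$ and $\|\tilde{d}\| = \|d\|$.
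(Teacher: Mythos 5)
Your argument is correct, but it takes a genuinely different route from the paper, which in fact offers no proof of Theorem~\ref{ext.} at all: the paper deduces it from the literature, namely Hausdorff's extension theorem \cite{Hau} and, for the norm-preserving pseudometric version, Ishiki's isometric extensor of metrics \cite{Ish8}, which is constructed from explicit Hausdorff-type extension formulas. Your route --- embed $A$ into $\conti(A)$ via $\iota(a) = d(a,\cdot)$ (well defined even though $d$ is only a pseudometric, injectivity never being needed), extend by Dugundji's theorem with values in $\conv(\iota(A))$, pull back the norm metric, and truncate at $M = \|d\|$ --- is sound at every step: the identity $\|\iota(a)-\iota(a')\| = d(a,a')$ follows from the reverse triangle inequality together with evaluation at $a'$; continuity of $\iota$ follows from joint continuity of $d$; the pullback $d_0$ is a continuous pseudometric extending $d$; and truncation preserves the pseudometric axioms and continuity, leaves the values on $A^2$ unchanged since $d \le M$ there, and forces $\|\tilde d\| = M$. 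One small remark: the convex-hull refinement of Dugundji's theorem (and likewise the radial-retraction alternative) is not actually needed, because your truncation at $M$ by itself turns any continuous pseudometric extension, bounded or not, into a bounded one with the right norm; so the plain extension statement suffices, and your worry about boundedness dissolves. What your approach buys is a short, self-contained proof from textbook ingredients (Kuratowski embedding plus Dugundji); what the paper's citation buys is a stronger statement, since Ishiki's result produces a single extension operator $\pseudo(A) \to \pseudo(Z)$ that is an isometry onto its image, considerably more than the one-pseudometric extension required here.
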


This result will play a key role in the present paper instead of Urysohn's lemma,
 which have been frequently used in proving the various Banach-Stone type theorems.

For a metrizable space $Z$ with a pseudometric $d \in \pseudo(Z)$, denote the open ball centered at $z \in Z$ of radius $r > 0$ by
 $$B_d(z,r) = \{w \in Z \mid d(z,w) < r\}$$
 and the closed ball by
 $$\overline{B}_d(z,r) = \{w \in Z \mid d(z,w) \leq r\}.$$
Note that since $d$ is continuous,
 $B_d(z,r)$ is open and $\overline{B}_d(z,r)$ is closed in $Z$.
Recall that the sup-metric induced by $\|\cdot\|$ is complete on $\conti(Z^2)$ and $\pseudo(Z)$,
 which is closed in $\conti(Z^2)$.
Since $\adm(Z)$ is dense in $\pseudo(Z)$, see \cite[Proposition~5]{Kos20},
 an isometry $T : \adm(X) \to \adm(Y)$ can be extended to an isometry $\tilde{T} : \pseudo(X) \to \pseudo(Y)$.
Furthermore, we will prove that ${\rm Pc}(Z)$ is also a dense subset of $\pseudo(Z)$.

\begin{proposition}\label{dense}
For every metrizable space $Z$, the subset ${\rm Pp}(Z)$ is dense in $\pseudo(Z)$,
 and hence so is ${\rm Pc}(Z)$.
\end{proposition}

\begin{proof}
The cases where $Z = \emptyset$ and where $Z$ is a singleton are trivial,
 so suppose that $Z$ is non-degenerate.
Fix any $d \in \pseudo(Z)$ and any $\epsilon > 0$.
Since $\adm(Z)$ is dense in $\pseudo(X)$,
 we may assume that $d \in \adm(Z)$.
Note that $d$ is bounded,
 so choose distinct points $x, y \in Z$ so that $d(x,y) \geq \|d\| - \epsilon$.
Let $a = d(x,y)$,
 where we may also assume that $a \geq \epsilon$ by replacing $\epsilon$ with a sufficient small positive number.
Applying Theorem~\ref{ext.}, we can find a pseudometric $\rho_n \in \pseudo(Z)$ for every natural number $n \geq 1$ such that
\begin{enumerate}
\renewcommand{\theenumi}{\roman{enumi}}
 \item $\rho(z,w) = 4\epsilon$ if $z \in \overline{B}_d(x,\epsilon/2^{n + 1})$ and $w \in \overline{B}_d(y,\epsilon/2^{n + 1})$;
 \item $\rho(z,w) = 2\epsilon$ if $z \in Z \setminus (B_d(x,\epsilon/2^n) \cup B_d(y,\epsilon/2^n))$ and $w \in \overline{B}_d(x,\epsilon/2^{n + 1}) \cup \overline{B}_d(y,\epsilon/2^{n + 1})$;
 \item $\rho(z,w) = 0$ if $z, w \in Z \setminus (B_d(x,\epsilon/2^n) \cup B_d(y,\epsilon/2^n))$,
 if $z, w \in \overline{B}_d(x,\epsilon/2^{n + 1})$,
 or if $z, w \in \overline{B}_d(y,\epsilon/2^{n + 1})$;
 \item $\rho(z,w) \leq 4\epsilon$ if otherwise.
\end{enumerate}
Define $\rho = \sum_{n = 1}^\infty \rho_n/2^n$,
 so $\rho \in \pseudo(Z)$ because $\pseudo(Z)$ is complete.
Observe that
 $$\|(d + \rho) - d\| = \|\rho\| \leq \sum_{n = 1}^\infty \|\rho_n\| \leq \sum_{n = 1}^\infty 4\epsilon/2^n = 4\epsilon.$$
It is only needed to verify that $d + \rho \in {\rm Pp}(Z)$.
Remark that $d + \rho \in \adm(Z)$ by Lemma~\ref{add.}.
Define the closed subsets
 $$Z_1 = Z \setminus (B_d(x,\epsilon/2) \cup B_d(y,\epsilon/2)),$$
 and for every $n \geq 2$,
 $$Z_n = (\overline{B}_d(x,\epsilon/2^{n - 1}) \cup \overline{B}_d(y,\epsilon/2^{n - 1})) \setminus (B_d(x,\epsilon/2^n) \cup B_d(y,\epsilon/2^n)).$$
\begin{enumerate}
 \item When $z = x$ or $z = y$,
 and $w \in Z_n$,
 \begin{align*}
  d(z,w) + \rho(z,w) &= d(z,w) + \sum_{n = 1}^\infty \rho_n(z,w)/2^n \leq a + \epsilon/2^{n - 1} + \sum_{i = 1}^{n - 1} 4\epsilon/2^i + \sum_{i = n}^\infty 2\epsilon/2^i\\
  &\leq a + 4\epsilon/2^{n + 1} + \sum_{i = 1}^{n - 1} 4\epsilon/2^i + \sum_{i = n + 1}^\infty 4\epsilon/2^i < a + \sum_{i = 1}^\infty 4\epsilon/2^i = a + 4\epsilon.
 \end{align*}
 \item When $z, w \in Z_n$,
 \begin{align*}
  d(z,w) + \rho(z,w) &= d(z,w) + \sum_{n = 1}^\infty \rho_n(z,w)/2^n \leq a + \epsilon/2^{n - 2} + \sum_{i = 1}^{n - 1} 4\epsilon/2^i\\
  &\leq a + 4\epsilon/2^n + \sum_{i = 1}^{n - 1} 4\epsilon/2^i < a + \sum_{i = 1}^\infty 4\epsilon/2^i = a + 4\epsilon.
 \end{align*}
 \item When $z \in Z_n$ and $w \in Z_{n + 1}$,
 \begin{align*}
  d(z,w) + \rho(z,w) &= d(z,w) + \sum_{n = 1}^\infty \rho_n(z,w)/2^n \leq a + \epsilon/2^{n - 1} + \epsilon/2^n + \sum_{i = 1}^n 4\epsilon/2^i\\
  &\leq a + 4\epsilon/2^{n + 1} + 4\epsilon/2^{n + 2} + \sum_{i = 1}^n 4\epsilon/2^i < a + \sum_{i = 1}^\infty 4\epsilon/2^i = a + 4\epsilon.
 \end{align*}
 \item When $z \in Z_n$ and $w \in Z_m$, $m \geq n + 2$,
 \begin{align*}
  d(z,w) + \rho(z,w) &= d(z,w) + \sum_{n = 1}^\infty \rho_n(z,w)/2^n\\
  &\leq a + \epsilon/2^{n - 1} + \epsilon/2^{m - 1} + \sum_{i = 1}^{n - 1} 4\epsilon/2^i + \sum_{i = n}^{m - 2} 2\epsilon/2^i + 4\epsilon/2^{m - 1}\\
  &\leq a + 4\epsilon/2^{n + 1} + 4\epsilon/2^{m + 1} + \sum_{i = 1}^{n - 1} 4\epsilon/2^i + \sum_{i = n + 1}^{m - 1} 4\epsilon/2^i + 4\epsilon/2^{m - 1}\\
  &= a + 4\epsilon/2^n - 4\epsilon/2^{n + 1} + 4\epsilon/2^{m + 1} + \sum_{i = 1}^{n - 1} 4\epsilon/2^i + \sum_{i = n + 1}^{m - 1} 4\epsilon/2^i + 4\epsilon/2^{m - 1}\\
  &< a + \sum_{i = 1}^\infty 4\epsilon/2^i = a + 4\epsilon.
 \end{align*}
\end{enumerate}
To sum up, we have that $d(z,w) + \rho(z,w) < a + 4\epsilon$ for all pairs $(z,w) \in Z^2 \setminus \{(x,y),(y,x)\}$.
By the definition of $\rho$,
 $$d(x,y) + \rho(x,y) = d(x,y) + \sum_{n = 1}^\infty \rho_n(x,y)/2^n = a + \sum_{n = 1}^\infty 4\epsilon/2^n = a + 4\epsilon.$$
The proof is completed.
\end{proof}

\section{The peaking function argument}

We shall prove Main Theorem by using the peaking function argument,
 which is based on \cite{HMM}, and traces its history back to Stone's method,
 that is, our strategy is different from Shanks' one.
From now on, let $X$ and $Y$ be non-degenerate metrizable spaces and $T : \pseudo(X) \to \pseudo(Y)$ be an isometry.
Assume that $\0$ is the zero function,
 that is a pseudometric.
By the same argument as Lemmas~2.3 and 2.4 of \cite{HMM}, we have the following:

\begin{lemma}\label{0}
For every $d \in \pseudo(X)$, if
 $$\max\{\|T(\0)\|,\|T^{-1}(\0)\|\} < \|d\|,$$
 then $\|T(d)\| = \|d\|$.
Similarly, for every $\rho \in \pseudo(Y)$, if
 $$\max\{\|T(\0)\|,\|T^{-1}(\0)\|\} < \|\rho\|,$$
 then $\|T^{-1}(\rho)\| = \|\rho\|$.
\end{lemma}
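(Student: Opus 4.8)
The plan is to exploit only two facts: that $T$ preserves distances and that pseudometrics are nonnegative functions on $Z^2$ whose norm is the supremum. Write $q = T^{-1}(\0) \in \pseudo(X)$ and $p = T(\0) \in \pseudo(Y)$, and set $s_1 = \|q\| = \|T^{-1}(\0)\|$, $s_2 = \|p\| = \|T(\0)\|$, so the hypothesis reads $\max\{s_1,s_2\} < \|d\|$. The first thing I would record is that surjectivity lets me rewrite the target norm as a distance back in the source cone: since $\0 = T(q)$,
$$\|T(e)\| = \|T(e) - \0\| = \|T(e) - T(q)\| = \|e - q\| \quad \text{for every } e \in \pseudo(X),$$
and symmetrically $\|T^{-1}(\rho)\| = \|\rho - p\|$ for every $\rho \in \pseudo(Y)$.

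First I would prove the easy inequality $\|T(e)\| \le \|e\|$ whenever $\|e\| \ge s_1$. Both $e$ and $q$ are nonnegative on $X^2$ with $\sup e = \|e\|$ and $\sup q = s_1$, so pointwise $-s_1 \le e - q \le \|e\|$; taking the supremum and using $s_1 \le \|e\|$ gives $\|T(e)\| = \|e - q\| \le \|e\|$. The identical reasoning applied to $T^{-1}$ yields $\|T^{-1}(\rho)\| \le \|\rho\|$ for every $\rho$ with $\|\rho\| \ge s_2$. In particular $\|T(d)\| \le \|d\|$ holds already, so only the reverse inequality remains.

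The main obstacle is the lower bound $\|T(d)\| \ge \|d\|$. The tempting move is to feed $\rho = T(d)$ into the inequality for $T^{-1}$, giving $\|d\| = \|T^{-1}(T(d))\| \le \|T(d)\|$; but this is only licensed once $\|T(d)\| \ge s_2$, and the crude estimate $\|T(d)\| \ge \|d\| - s_2$ need not reach $s_2$ when $\|d\|$ barely exceeds $s_2$. I would circumvent this by scaling. For a large integer $n$ the metric $nd$ has huge norm, so the upper bound gives $\|T(nd)\| \le n\|d\|$, while the triangle inequality gives $\|T(nd)\| \ge \|T(nd) - T(\0)\| - \|T(\0) - \0\| = n\|d\| - s_2$; once $n\|d\| > 2s_2$ this forces $\|T(nd)\| > s_2$, so the $T^{-1}$ inequality applies to $T(nd)$ and yields $\|T(nd)\| \ge n\|d\|$, whence $\|T(nd)\| = n\|d\|$ exactly. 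A final application of the triangle inequality then closes the argument,
$$\|T(d)\| \ge \|T(nd)\| - \|T(nd) - T(d)\| = n\|d\| - (n-1)\|d\| = \|d\|,$$
which together with the upper bound gives $\|T(d)\| = \|d\|$. The second assertion is obtained verbatim after interchanging the roles of $T$ and $T^{-1}$ (equivalently of $X$ and $Y$).
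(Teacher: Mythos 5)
Your proof is correct, and it is self-contained in a way the paper's is not: the paper offers no written argument for this lemma, deferring entirely to Lemmas~2.3 and 2.4 of \cite{HMM}. Your upper bound is the standard positive-cone estimate: for nonnegative functions $g,h$ one has $-\|h\| \le g-h \le \|g\|$ pointwise, hence $\|g-h\| \le \max\{\|g\|,\|h\|\}$; applied to $e - q$ this gives $\|T(e)\| \le \|e\|$ once $\|e\| \ge \|T^{-1}(\0)\|$. What you did not notice is that the very same estimate, applied on the other side of the isometry, yields the lower bound in one line: $\|d\| = \|d-\0\| = \|T(d)-T(\0)\| \le \max\{\|T(d)\|,\|T(\0)\|\}$, and since $\|T(\0)\| < \|d\|$ the maximum cannot be $\|T(\0)\|$, so $\|d\| \le \|T(d)\|$. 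This symmetric double application of the positivity estimate is the argument the citation to \cite{HMM} is pointing at. Your replacement for it --- scaling $d$ by a large integer $n$ so that $\|T(nd)\|$ provably exceeds $\|T(\0)\|$, concluding $\|T(nd)\| = n\|d\|$, and transferring this back via $\|T(d)\| \ge \|T(nd)\| - \|T(nd)-T(d)\| = n\|d\| - (n-1)\|d\|$ --- is valid in every step ($nd \in \pseudo(X)$; the threshold $n\|d\| > 2\|T(\0)\|$ is reachable because the hypothesis forces $\|d\| > 0$; and $\|T(nd)-T(d)\| = \|(n-1)d\| = (n-1)\|d\|$ by the isometry), but it is a detour. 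What the detour buys, incidentally, is a marginally weaker hypothesis: your argument never uses the strict inequality $\|T(\0)\| < \|d\|$, only $\|T^{-1}(\0)\| \le \|d\|$ together with $\|d\| > 0$, whereas the direct argument needs $\|d\|$ to strictly dominate $\|T(\0)\|$. For the paper's purposes (the application in Proposition~\ref{norm}) this extra generality is not needed, so the one-line route is preferable for brevity.
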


It follows from the above lemma that the isometry $T$ is norm-preserving.

\begin{proposition}\label{norm}
The equalities $T(\0) = \0$ and $T^{-1}(\0) = \0$ hold.
Hence $\|T(d)\| = \|d\|$ for any $d \in \pseudo(X)$ and $\|T^{-1}(\rho)\| = \|\rho\|$ for any $\rho \in \pseudo(Y)$.
\end{proposition}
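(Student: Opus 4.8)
The plan is to reduce everything to a single scalar $c := \|T(\0)\|$ and to prove $c = 0$; once this is done, the displayed ``hence'' clause is immediate from Lemma~\ref{0}. I will not need the metric extension theorem (Theorem~\ref{ext.}) here—only the isometry identity, the fact that $\pseudo(Y)$ is a cone (closed under multiplication by positive scalars), and Lemma~\ref{0}.

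First I would record that $\max\{\|T(\0)\|,\|T^{-1}(\0)\|\} = c$. Writing $\rho_0 := T(\0)$, so that $T^{-1}(\rho_0) = \0$, the isometry property gives
\[
\|T^{-1}(\0)\| = \|T^{-1}(\0) - T^{-1}(\rho_0)\| = \|\0 - \rho_0\| = \|\rho_0\| = c,
\]
so both quantities appearing in the hypothesis of Lemma~\ref{0} are equal to $c$.

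The crux is to rule out $c > 0$, and here the key idea is to feed Lemma~\ref{0} with the \emph{doubled} pseudometric $2\rho_0$. Since $\rho_0 \in \pseudo(Y)$ and $\pseudo(Y)$ is closed under multiplication by $2$, we have $2\rho_0 \in \pseudo(Y)$ with $\|2\rho_0\| = 2c > c$, so the second part of Lemma~\ref{0} yields $\|T^{-1}(2\rho_0)\| = 2c$. On the other hand, using $T^{-1}(\rho_0) = \0$ once more,
\[
\|T^{-1}(2\rho_0)\| = \|T^{-1}(2\rho_0) - T^{-1}(\rho_0)\| = \|2\rho_0 - \rho_0\| = \|\rho_0\| = c.
\]
These two evaluations force $2c = c$, contradicting $c > 0$; hence $c = 0$, and since $\0$ is the only pseudometric of norm $0$, we conclude $T(\0) = \0$ and $T^{-1}(\0) = \0$.

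Finally, the ``hence'' part follows at once: with $c = 0$, Lemma~\ref{0} gives $\|T(d)\| = \|d\|$ for every $d$ with $\|d\| > 0$, while $\|d\| = 0$ forces $d = \0$ and $T(d) = \0$; the statement for $T^{-1}$ is symmetric. The only place demanding genuine thought—the step I expect to be the real content—is the contradiction argument, whose punch line is the self-referential use of the scaling structure of the cone together with $T^{-1}(T(\0)) = \0$; everything else is bookkeeping with the isometry identity.
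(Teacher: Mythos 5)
Your proof is correct and takes essentially the same approach as the paper: both apply Lemma~\ref{0} to a positive scalar multiple of $T(\0)$ and reach a contradiction by computing $\|T^{-1}(\cdot)\|$ a second way via the isometry identity anchored at $T^{-1}(T(\0)) = \0$. The only difference is cosmetic: your preliminary observation $\|T^{-1}(\0)\| = \|T(\0)\|$ lets you scale by the factor $2$, whereas the paper scales $T(\0)$ by $(2\|T(\0)\| + \|T^{-1}(\0)\|)/\|T(\0)\|$ to satisfy the hypothesis of Lemma~\ref{0} without that step.
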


\begin{proof}
Assume that $T(\0) \neq \0$,
 so there exist points $x, y \in Y$ such that $T(\0)(x,y) > 0$.
Then
 \begin{align*}
  \max\{\|T(\0)\|,\|T^{-1}(\0)\|\} &< 2\|T(\0)\| + \|T^{-1}(\0)\| = \frac{2\|T(\0)\| + \|T^{-1}(\0)\|}{\|T(\0)\|} \|T(\0)\|\\
  &= \bigg\|\frac{2\|T(\0)\| + \|T^{-1}(\0)\|}{\|T(\0)\|} T(\0)\bigg\|.
 \end{align*}
Due to Lemma~\ref{0},
 $$\bigg\|T^{-1}\bigg(\frac{2\|T(\0)\| + \|T^{-1}(\0)\|}{\|T(\0)\|} T(\0)\bigg)\bigg\| = \bigg\|\frac{2\|T(\0)\| + \|T^{-1}(\0)\|}{\|T(\0)\|} T(\0)\bigg\|.$$
Since $T$ is an isometry,
 \begin{align*}
  2\|T(\0)\| + \|T^{-1}(\0)\| &> 2\|T(\0)\| + \|T^{-1}(\0)\| - \|T(\0)\| = \bigg(\frac{2\|T(\0)\| + \|T^{-1}(\0)\|}{\|T(\0)\|} - 1\bigg)\|T(\0)\|\\
  &= \bigg\|\bigg(\frac{2\|T(\0)\| + \|T^{-1}(\0)\|}{\|T(\0)\|} - 1\bigg)T(\0)\bigg\|\\
  &= \bigg\|\frac{2\|T(\0)\| + \|T^{-1}(\0)\|}{\|T(\0)\|} T(\0) - T(\0)\bigg\|\\
  &= \bigg\|T^{-1}\bigg(\frac{2\|T(\0)\| + \|T^{-1}(\0)\|}{\|T(\0)\|} T(\0)\bigg) - T^{-1}(T(\0))\bigg\|\\
  &= \bigg\|T^{-1}\bigg(\frac{2\|T(\0)\| + \|T^{-1}(\0)\|}{\|T(\0)\|} T(\0)\bigg) - \0\bigg\|\\
  &= \bigg\|T^{-1}\bigg(\frac{2\|T(\0)\| + \|T^{-1}(\0)\|}{\|T(\0)\|} T(\0)\bigg)\bigg\| = \bigg\|\frac{2\|T(\0)\| + \|T^{-1}(\0)\|}{\|T(\0)\|} T(\0)\bigg\|\\
  &= 2\|T(\0)\| + \|T^{-1}(\0)\|.
 \end{align*}
This is a contradiction.
We conclude that $T(\0) = \0$.
Moreover, for each $d \in \pseudo(X)$,
 $$\|T(d)\| = \|T(d) - \0\| = \|T(d) - T(\0)\| = \|d - \0\| = \|d\|.$$
Similarly, $T^{-1}(\0) = \0$ and $\|T^{-1}(\rho)\| = \|\rho\|$ for any $\rho \in \pseudo(Y)$.
\end{proof}

For a metrizable space $Z$, let $\fin_2(Z)$ be the hyperspace consisting of singletons and doubletons of $Z$ endowed with the Vietoris topology,
 and let
 $$\doubl(Z) = \{\{x,y\} \in \fin_2(Z) \mid x \neq y\}.$$
For each $\{x,y\} \in \fin_2(Z)$, put
 $$\mathcal{P}(Z,\{x,y\}) = \{d \in \pseudo(Z) \mid d(x,y) = \|d\|\}.$$
Given a pseudometric $d \in \pseudo(Z)$, we define
 $$\mathcal{F}(Z,d) = \{\{x,y\} \in \fin_2(Z) \mid d(x,y) = \|d\|\}.$$

\begin{lemma}\label{sum}
Fix any $\{x,y\} \in \fin_2(X)$ and any $d_i \in \mathcal{P}(X,\{x,y\})$, $1 \leq i \leq n$.
Then the sum $d = \sum_{i = 1}^n d_i \in \mathcal{P}(X,\{x,y\})$.
Furthermore, if $T({\rm Pc}(X)) \subset {\rm Pc}(Y)$ and each $d_i \in {\rm Pc}(X)$,
 there exists $\{z,w\} \in \fin_2(Y)$ such that $T(d)(z,w) = d(x,y)$.
\end{lemma}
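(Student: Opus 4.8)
The first assertion is a direct computation. Since each summand satisfies $d_i(z,w) \le \|d_i\|$ for all $(z,w) \in X^2$, subadditivity of the sup-norm gives $\|d\| = \sup_{(z,w)} \sum_{i} d_i(z,w) \le \sum_i \|d_i\|$. On the other hand, $d(x,y) = \sum_i d_i(x,y) = \sum_i \|d_i\|$ because each $d_i \in \mathcal{P}(X,\{x,y\})$. Combined with the trivial bound $d(x,y) \le \|d\|$, this forces $d(x,y) = \|d\| = \sum_i \|d_i\|$, so $d \in \mathcal{P}(X,\{x,y\})$. (If $\{x,y\}$ is a singleton then every $d_i(x,y) = 0 = \|d_i\|$, so each $d_i = \0$ and the claim is trivial; thus I may assume $x \neq y$ and $\|d\| > 0$ below.)

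For the second assertion I would first verify that $d \in {\rm Pc}(X)$. Admissibility follows from Lemma~\ref{add.} by induction on $n$. For the compactness condition, the equality $\|d\| = \sum_i \|d_i\|$ together with $d_i(z,w) \le \|d_i\|$ shows that $d(z,w) = \|d\|$ holds if and only if $d_i(z,w) = \|d_i\|$ for every $i$; hence $\mathcal{F}(X,d) = \bigcap_{i} \mathcal{F}(X,d_i)$. If $K_i$ is a compact witness for $d_i \in {\rm Pc}(X)$, then any pair realizing $\|d\|$ lies in $\bigcap_i K_i$, a compact set, so $d \in {\rm Pc}(X)$. By hypothesis $T(d) \in {\rm Pc}(Y)$; fix a compact witness $K \subset Y$ for it. Since $\|T(d)\| = \|d\| = d(x,y)$ by Proposition~\ref{norm} and the first assertion, the conclusion becomes equivalent to the statement that $T(d)$ \emph{attains} its norm on $Y^2$: any pair $\{z,w\}$ with $T(d)(z,w) = \|T(d)\|$ automatically satisfies $T(d)(z,w) = d(x,y)$.

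Thus the real task is to produce an exact maximizer of $T(d)$. I would argue by contradiction, assuming $T(d)(z,w) < \|T(d)\| =: c$ for all $(z,w) \in Y^2$, and choose a maximizing sequence $(z_k,w_k)$ with $T(d)(z_k,w_k) \to c$. The plan is to confine this sequence to a compact subset of $Y^2$, extract a convergent subsequence, and invoke continuity of $T(d)$ to contradict non-attainment. To localize the sequence I would perturb: using the norm-preserving pseudometric extension of Theorem~\ref{ext.} exactly as in the proof of Proposition~\ref{dense}, each near-maximizer can be promoted to the unique maximizer of an admissible metric lying in ${\rm Pp}(Y) \subseteq {\rm Pc}(Y)$ and differing from $T(d)$ by a pseudometric of arbitrarily small norm. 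Transporting these perturbations through the isometry and playing them against the compact witnesses on the two sides is meant to trap the pairs $\{z_k,w_k\}$ inside a single compact set.

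The main obstacle is precisely this confinement step. The defining condition of ${\rm Pc}$ constrains only the \emph{exact} maximizers of a metric, and as the admissible metric $(s,t) \mapsto |\arctan s - \arctan t|$ on $\mathbb{R}$ shows, a member of ${\rm Pc}(Y)$ need not attain its norm at all; consequently the witness $K$ of $T(d)$ by itself gives no control over near-maximizers. The crux is therefore to convert ``near-maximizers escaping every compact set'' into a genuine contradiction, which requires carrying the compactness furnished by $d \in {\rm Pc}(X)$ across the isometry in a uniform way. I expect this to be the technical heart of the peaking-function argument, and the place where Theorem~\ref{ext.} substitutes for the Urysohn-type constructions used in the classical Banach-Stone proofs.
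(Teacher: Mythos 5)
Your proof of the first assertion and of the membership $d \in {\rm Pc}(X)$ coincides with the paper's own argument (same norm-additivity computation, same observation that $d(u,v)=\|d\|$ forces $d_i(u,v)=\|d_i\|$ for every $i$, hence confinement of all maximizing pairs to $\bigcap_i K_i$). The gap is everything after your reduction to ``$T(d)$ attains its norm'': that part of the proposal is a plan, not a proof, and as you yourself note, the confinement step is missing. Moreover, the plan cannot be completed from the hypotheses as you read them. Under the literal (vacuous) reading of the ${\rm Pc}$ condition, your own example $(s,t) \mapsto |\arctan s - \arctan t|$ belongs to ${\rm Pc}(\mathbb{R})$, so the hypothesis $T({\rm Pc}(X)) \subset {\rm Pc}(Y)$ places no constraint at all on non-attaining metrics; nothing in the assumptions then distinguishes $T(d)$ from such a metric, so no maximizing-sequence or perturbation argument can produce the desired contradiction. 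On that reading the step you are trying to prove is simply not a consequence of the hypotheses.

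The paper closes this step in one line by reading the definition of ${\rm Pc}$ non-vacuously: for $T(d) \in {\rm Pc}(Y)$ the compact witness $L$ is a set within which the supremum $\|T(d)\|$ is achieved --- ``since $L$ is compact and $T(d)$ is continuous, there is $\{z,w\} \in \mathcal{F}(Y,T(d))$'' --- after which Proposition~\ref{norm} gives $T(d)(z,w) = \|T(d)\| = \|d\| = d(x,y)$. In other words, membership in ${\rm Pc}$ is intended to entail $\mathcal{F}(Z,d) \neq \emptyset$, with the compact set confining the (nonempty) set of maximizers; this is also the reading the rest of the paper depends on, e.g.\ Lemma~\ref{intersect.} needs $\mathcal{F}(Y,T(d_0))$ to be a nonempty compact set for its finite-intersection-property argument. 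So your diagnosis of the crux was exactly right, and the arctan example is a fair criticism of the definition as literally worded; but the missing step is supplied by the intended interpretation of the definition, not by the analytic argument you sketch, which is a dead end.
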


\begin{proof}
Observe that
 $$d(x,y) \leq \|d\| = \Bigg\|\sum_{i = 1}^n d_i\Bigg\| \leq \sum_{i = 1}^n \|d_i\| = \sum_{i = 1}^n d_i(x,y) = d(x,y),$$
 so $d(x,y) = \|d\|$.
Therefore $d \in \mathcal{P}(X,\{x,y\})$.
To show that the latter part, take a compact subset $K_i \subset X$, $1 \leq i \leq n$, such that for any $u, v \in X$ with $d_i(u,v) = \|d_i\|$, $u, v \in K_i$.
If $d(u,v) = \|d\|$,
 then
 $$\sum_{i = 1}^n d_i(u,v) = d(u,v) = \|d\| = d(x,y) = \sum_{i = 1}^n d_i(x,y).$$
Since for each $i \in \{1, \ldots, n\}$, $d_i(u,v) \leq \|d_i\| = d_i(x,y)$,
 we can get that $d_i(u,v) = d_i(x,y) = \|d_i\|$.
Hence the points $u$ and $v$ are contained in any $K_i$,
 which implies that $d \in {\rm Pc}(X)$.
By the assumption of $T$, $T(d) \in {\rm Pc}(Y)$,
 that is, there is a compact set $L \subset Y$ such that for any $z, w \in Y$, if $T(d)(z,w) = \|T(d)\|$,
 then $z, w \in L$.
Since $L$ is compact and $T(d)$ is continuous,
 there is $\{z,w\} \in \mathcal{F}(Y,T(d))$.
Then according to Proposition~\ref{norm},
 $$T(d)(z,w) = \|T(d)\| = \|d\| = d(x,y).$$
The proof is finished.
\end{proof}

Moreover, we have the following:

\begin{lemma}\label{f.i.p.}
Suppose that $T({\rm Pc}(X)) \subset {\rm Pc}(Y)$.
For any $\{x,y\} \in \fin_2(X)$ and any $d_i \in \mathcal{P}(X,\{x,y\}) \cap {\rm Pc}(X)$, $1 \leq i \leq n$, the intersection $\bigcap_{i = 1}^n \mathcal{F}(Y,T(d_i)) \neq \emptyset$.
\end{lemma}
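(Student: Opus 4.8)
The plan is to produce a single pair $\{z,w\} \in \fin_2(Y)$ that lies in every $\mathcal{F}(Y,T(d_i))$ at once, since that is precisely the assertion $\bigcap_{i=1}^n \mathcal{F}(Y,T(d_i)) \neq \emptyset$. The natural candidate is the peaking pair of the sum, so first I would set $d = \sum_{i=1}^n d_i$. Lemma~\ref{sum} immediately gives that $d \in \mathcal{P}(X,\{x,y\}) \cap {\rm Pc}(X)$ and, using the hypothesis $T({\rm Pc}(X)) \subset {\rm Pc}(Y)$, that there exists $\{z,w\} \in \fin_2(Y)$ with $T(d)(z,w) = d(x,y) = \|d\|$. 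This $\{z,w\}$ is the pair I would propose as the common element, and the rest of the proof is devoted to showing it works for each index $i$.

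The second step is to record the norm additivity forced by the common peaking pair $\{x,y\}$. Since each $d_i \in \mathcal{P}(X,\{x,y\})$ we have $d_i(x,y) = \|d_i\|$, so $\|d\| = d(x,y) = \sum_{i=1}^n d_i(x,y) = \sum_{i=1}^n \|d_i\|$. Applying Lemma~\ref{sum} to the subfamily $\{d_j \mid j \neq i\}$ shows that $d - d_i = \sum_{j\neq i} d_j$ also attains its norm at $\{x,y\}$, whence
\begin{equation*}
 \|d - d_i\| = \sum_{j \neq i} d_j(x,y) = \sum_{j \neq i} \|d_j\| = \|d\| - \|d_i\|.
\end{equation*}
This identity is the engine of the argument, and it is available only because all the $d_i$ peak at the very same pair.

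The decisive step is to transport the peaking pair $\{z,w\}$ from $T(d)$ to each $T(d_i)$. Here lies the main obstacle: $T$ is merely an isometry and is not assumed linear, so one cannot write $T(d) = \sum_i T(d_i)$ and must instead extract pointwise information about each $T(d_i)$ from the single pair $\{z,w\}$ using only the distance-preserving property together with the norm identity above. Evaluating in the sup-norm I would estimate
\begin{equation*}
 |T(d)(z,w) - T(d_i)(z,w)| \leq \|T(d) - T(d_i)\| = \|d - d_i\| = \|d\| - \|d_i\|,
\end{equation*}
which rearranges to $T(d_i)(z,w) \geq T(d)(z,w) - (\|d\| - \|d_i\|) = \|d\| - (\|d\| - \|d_i\|) = \|d_i\|$. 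On the other hand $T(d_i) \in \pseudo(Y)$ and $\|T(d_i)\| = \|d_i\|$ by Proposition~\ref{norm}, so $T(d_i)(z,w) \leq \|d_i\|$. Combining the two bounds yields $T(d_i)(z,w) = \|d_i\| = \|T(d_i)\|$, i.e. $\{z,w\} \in \mathcal{F}(Y,T(d_i))$ for every $i$, and the intersection is non-empty. (The degenerate possibility $z=w$ forces $\|d\|=0$, hence every $\|d_i\|=0$, and the conclusion is then trivial.)
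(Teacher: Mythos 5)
Your proposal is correct and follows essentially the same route as the paper: both take $d = \sum_{i=1}^n d_i$, use Lemma~\ref{sum} to get the pair $\{z,w\}$ with $T(d)(z,w) = \|d\|$, and then squeeze $T(d_i)(z,w)$ between $\|d_i\|$ from above (Proposition~\ref{norm}) and $\|d\| - \|d - d_i\|$ from below via the isometry property; the paper phrases the lower bound in terms of $\rho_i = d - d_i$ and $\rho_i(x,y) = \|\rho_i\|$, which is exactly your identity $\|d - d_i\| = \|d\| - \|d_i\|$.
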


\begin{proof}
Let $d = \sum_{i = 1}^n d_i$ and take a doubleton $\{z,w\} \in \fin_2(Y)$ such that $T(d)(z,w) = d(x,y)$ as in Lemma~\ref{sum}.
For each $i \in \{1, \ldots, n\}$, set $\rho_i = d - d_i$,
 so $\rho_i \in \mathcal{P}(X,\{x,y\})$ and $\rho_i(x,y) = \|\rho_i\|$ by Lemma~\ref{sum}.
Since $T$ is isometric,
 we have that
 $$T(d)(z,w) - T(d_i)(z,w) \leq \|T(d) - T(d_i)\| = \|d - d_i\| = \|\rho_i\| = \rho_i(x,y),$$
 and that according to Proposition~\ref{norm},
 $$\|T(d_i)\| = \|d_i\| = d_i(x,y) = d(x,y) - \rho_i(x,y) = T(d)(z,w) - \rho_i(x,y) \leq T(d_i)(z,w) \leq \|T(d_i)\|.$$
Thus $T(d_i)(z,w) = \|T(d_i)\|$,
 which implies that $\{z,w\} \in \mathcal{F}(Y,T(d_i))$.
Consequently, the intersection $\bigcap_{i = 1}^n \mathcal{F}(Y,T(d_i))$ is not empty.
\end{proof}

Due to the similar method to Proposition~\ref{dense}, we can prove the following:

\begin{lemma}\label{transl.}
Let $Z$ be a non-degenerate metrizable space.
For each $d \in \pseudo(Z)$ and each $\{x,y\} \in \doubl(Z)$, there is $\rho \in \mathcal{P}(Z,\{x,y\})$ such that $d + \rho \in \mathcal{P}(Z,\{x,y\}) \cap {\rm Pp}(Z)$.
\end{lemma}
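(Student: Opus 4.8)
The plan is to reproduce the dyadic spike of Proposition~\ref{dense} at the prescribed doubleton $\{x,y\}$, with two changes forced by the present setting: the spike must be tall enough to overwhelm the global peak of $d$ (here $\{x,y\}$ is given in advance rather than chosen near where $d$ nearly attains its norm), and an admissible summand must be folded in so that $d+\rho$ is admissible even when $d$ is a degenerate pseudometric. First I would fix a bounded $e\in\adm(Z)$ and set $d'=d+e$, which is admissible by Lemma~\ref{add.}. Since $x\neq y$, we have $a':=d'(x,y)>0$, so that $d'$-balls genuinely separate $x$ and $y$; this is exactly the point that breaks down for $d$ itself, whose balls around $x$ and $y$ coincide when $d(x,y)=0$. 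Writing $M'=\|d'\|$, I would then choose a small radius scale $\delta$ with $0<\delta<2a'$ and a large height $b$ with $b>M'$ and $b>2\delta$.

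Next I would construct, for each $n\geq 1$, a pseudometric $\rho_n\in\pseudo(Z)$ exactly as in Proposition~\ref{dense}, but with $d'$-balls of radii $\delta/2^{n+1}$ and $\delta/2^{n}$ and spike height $b$: collapse the three pairwise disjoint closed clusters $\overline{B}_{d'}(x,\delta/2^{n+1})$, $\overline{B}_{d'}(y,\delta/2^{n+1})$ and $Z\setminus(B_{d'}(x,\delta/2^{n})\cup B_{d'}(y,\delta/2^{n}))$, assign mutual distances $b$, $b/2$, $b/2$ (a genuine pseudometric since $b\leq b/2+b/2$), and extend it over $Z$ with norm $b$ by Theorem~\ref{ext.}; the clusters are disjoint precisely because $\delta<2a'$. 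Setting $s=\sum_{n=1}^{\infty}\rho_n/2^n\in\pseudo(Z)$, one has $s(x,y)=b=\|s\|$, and, since $\bigcap_n\overline{B}_{d'}(x,\delta/2^{n+1})=\{x\}$ as $d'$ is a metric, the equality $s(z,w)=b$ holds only for $(z,w)\in\{(x,y),(y,x)\}$.

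The core estimate is that $(d'+s)(z,w)<a'+b$ for every $(z,w)\neq(x,y)$, which I would split into the same four cases as in Proposition~\ref{dense}. For pairs lying outside both $\delta/2$-balls the spike vanishes, so the left-hand side is at most $M'<a'+b$, which is where the largeness of $b$ enters. For pairs tending to $(x,y)$ the decisive feature is that, taking balls in $d'$, the membership $w\in\overline{B}_{d'}(x,\delta/2^{n-1})$ bounds $d'(z,w)\leq a'+\delta/2^{n-1}$ directly by the triangle inequality, while the spike deficit across the corresponding annulus is of order $b/2^{n}$; since $b>2\delta$ the deficit dominates and the strict inequality survives the summation, paralleling the telescoping computations (1)--(4) of Proposition~\ref{dense}. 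Hence $d'+s$ is admissible by Lemma~\ref{add.} and peaks only at $\{x,y\}$, that is, $d'+s\in\mathcal{P}(Z,\{x,y\})\cap{\rm Pp}(Z)$.

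Finally I would put $\rho=s+e$, so that $\rho\in\adm(Z)$ by Lemma~\ref{add.} and $d+\rho=(d+e)+s=d'+s$ already lies in $\mathcal{P}(Z,\{x,y\})\cap{\rm Pp}(Z)$; it then only remains to check $\rho\in\mathcal{P}(Z,\{x,y\})$, i.e. $\rho(x,y)=\|\rho\|$, which amounts to $b-s(z,w)\geq e(z,w)-e(x,y)$ for all $(z,w)$. Whenever $s(z,w)\leq b-\|e\|$ this is immediate, since the right-hand side is at most $\|e\|$. The genuine difficulty, and the step I expect to be the main obstacle, is the band of pairs near $(x,y)$ where $s(z,w)$ is within $\|e\|$ of $b$: there I would use $e\leq d'$ to bound the right-hand side by $e(z,x)+e(w,y)\leq d'(z,x)+d'(w,y)$, against a spike deficit $b-s(z,w)$ of order $(b/\delta)\max\{d'(z,x),d'(w,y)\}$, so that taking $b/\delta$ large again forces the inequality. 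The delicate point throughout is to fix $e$, then $\delta$, then $b$ in this order, so that the tallness needed to dominate $d$ and the fineness needed for both peak comparisons are compatible.
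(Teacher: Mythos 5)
Your construction is, at its core, the paper's own: both anchor the dyadic spike of Proposition~\ref{dense} at the prescribed doubleton $\{x,y\}$ via Theorem~\ref{ext.}, with the height inflated so that the spike dominates $d$ globally, and both use admissibility of the perturbed metric to guarantee that every point of $Z\setminus\{x,y\}$ lies in some annulus. The genuine difference is how the admissible summand is folded in, and the paper's choice is decisively slicker: it adds an admissible metric $e$ that itself lies in $\mathcal{P}(Z,\{x,y\})$ (such an $e$ exists, e.g.\ truncate a bounded admissible metric at its value at $(x,y)$), reduces to $d\in\adm(Z)$, and builds the spike $\rho$ with $\rho(x,y)=\|\rho\|$; the final answer $e+\rho$ is then \emph{automatically} in $\mathcal{P}(Z,\{x,y\})$, since a sum of pseudometrics each attaining its norm at $(x,y)$ attains its norm there (the computation of Lemma~\ref{sum}). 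Because you add an \emph{arbitrary} admissible $e$, you are forced into the extra step you call the main obstacle — verifying $\|s+e\|=(s+e)(x,y)$ — which the paper never faces. Your resolution of that step (pair each of $z,w$ with the nearer of $x,y$, bound $e\leq d'$, and take $b/\delta$ large) does work, so your proof can be pushed through, but the truncation trick makes it unnecessary. A second cosmetic difference: the paper needs no free parameters, taking radii scaled by $a=d(x,y)$ and height $4b$ with $b=\min\{\max_{z}d(x,z),\max_{z}d(y,z)\}$, for which $a\leq b$ and $\|d\|\leq 2b$ hold automatically.

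Two flags on your write-up. First, your stated constants are too small: $b>2\delta$ does not survive all cases of the telescoping estimate (for $z\in Z_n$, $w\in Z_m$ with $m\geq n+2$ the excess of $d'(z,w)$ over $a'$ can reach $2.5\delta/2^{n}$ against a spike deficit of only $b/2^{n}$, and your final peak comparison for $s+e$ needs roughly $b\geq 4\delta$). This is harmless — your own closing remark that one should take $b/\delta$ large absorbs it — but the inequalities as stated do not close. Second, your claim that $s(z,w)=b$ only at $(x,y)$ ``since $\bigcap_n\overline{B}_{d'}(x,\delta/2^{n+1})=\{x\}$'' is not justified: Theorem~\ref{ext.} controls $\rho_n$ only on the three clusters and in norm, so $\rho_n$ may well attain the value $b$ at pairs outside them. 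Fortunately this remark carries no weight: membership in ${\rm Pp}(Z)$ requires precisely the strict inequality $(d'+s)(z,w)<a'+b$ for $(z,w)\neq(x,y),(y,x)$, which is exactly what your four-case estimate delivers, so you should simply delete the remark.
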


\begin{proof}
Adding an admissible metric in $\mathcal{P}(X,\{x,y\})$, we may assume that $d \in \adm(Z)$ according to Lemma~\ref{add.}.
Let $a = d(x,y) > 0$ and
 $$b = \min\Big\{\max_{z \in Z} d(x,z),\max_{z \in Z} d(y,z)\Big\}.$$
Remark that $a \leq b$.
Using Theorem~\ref{ext.}, we can obtain $\rho_n \in \pseudo(Z)$ for each natural number $n \geq 1$ such that
\begin{enumerate}
\renewcommand{\theenumi}{\roman{enumi}}
 \item $\rho(z,w) = 4b$ if $z \in \overline{B}_d(x,a/2^{n + 1})$ and $w \in \overline{B}_d(y,a/2^{n + 1})$;
 \item $\rho(z,w) = 2b$ if $z \in Z \setminus (B_d(x,a/2^n) \cup B_d(y,a/2^n))$ and $w \in \overline{B}_d(x,a/2^{n + 1}) \cup \overline{B}_d(y,a/2^{n + 1})$;
 \item $\rho(z,w) = 0$ if $z, w \in Z \setminus (B_d(x,a/2^n) \cup B_d(y,a/2^n))$,
 if $z, w \in \overline{B}_d(x,a/2^{n + 1})$,
 or if $z, w \in \overline{B}_d(y,a/2^{n + 1})$;
 \item $\rho(z,w) \leq 4b$ if otherwise.
\end{enumerate}
By the same argument as Proposition~\ref{dense}, $\rho = \sum_{n = 1}^\infty \rho_n/2^n$ is the desired pseudometric.
Indeed, $\rho(x,y) = 4b = \|\rho\|$,
 and hence $\rho \in \mathcal{P}(Z,\{x,y\})$.
Moreover, for every pair $(z,w) \in Z^2 \setminus \{(x,y),(y,x)\}$,
 $$d(x,y) + \rho(x,y) = a + 4b = \|d + \rho\| > d(z,w) + \rho(z,w),$$
 so $d + \rho \in \mathcal{P}(Z,\{x,y\}) \cap {\rm Pp}(Z)$.
We complete the proof.
\end{proof}

Using the finite intersection property in compact spaces, see \cite[Theorem~3.1.1]{E},
 we can obtain the following lemma.

\begin{lemma}\label{intersect.}
Assume that $T({\rm Pc}(X)) \subset {\rm Pc}(Y)$.
For every doubleton $\{x,y\} \in \doubl(X)$,
 $$\bigcap_{d \in \mathcal{P}(X,\{x,y\}) \cap {\rm Pc}(X)} \mathcal{F}(Y,T(d)) \neq \emptyset.$$
\end{lemma}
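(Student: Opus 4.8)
The plan is to deduce the lemma from the finite intersection property characterization of compactness, exactly as the hint suggests; the family in question is $\{\mathcal{F}(Y,T(d)) \mid d \in \mathcal{P}(X,\{x,y\}) \cap {\rm Pc}(X)\}$. First I would record that this index family is nonempty: applying Lemma~\ref{transl.} to $d = \0$ produces $\rho \in \mathcal{P}(X,\{x,y\})$ with $\0 + \rho = \rho \in \mathcal{P}(X,\{x,y\}) \cap {\rm Pp}(X)$, and since every $\rho \in {\rm Pp}(X)$ attains its norm only on the unique doubleton $\{z,w\}$, hence only on the finite (so compact) set $\{z,w\}$, one has ${\rm Pp}(X) \subset {\rm Pc}(X)$; thus $\rho \in \mathcal{P}(X,\{x,y\}) \cap {\rm Pc}(X)$. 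Next, Lemma~\ref{f.i.p.} already guarantees that every finite subcollection $d_1, \dots, d_n$ of the index set satisfies $\bigcap_{i=1}^n \mathcal{F}(Y,T(d_i)) \neq \emptyset$, i.e.\ the family has the finite intersection property. The task is then to pass from finite to arbitrary intersections.

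The only genuine obstacle is that $\fin_2(Y)$ need not be compact when $Y$ is noncompact, so the compactness argument cannot be applied to the ambient hyperspace directly. This is precisely where the hypothesis $T({\rm Pc}(X)) \subset {\rm Pc}(Y)$ enters: I would fix one member $d_0$ of the index set and use it to localize to a compact piece. Since $T(d_0) \in {\rm Pc}(Y)$, there is a compact set $L \subset Y$ such that every pair realizing $\|T(d_0)\|$ lies in $L$, whence $\mathcal{F}(Y,T(d_0)) \subset \fin_2(L)$. The hyperspace $\fin_2(L)$ is compact, being the image of the compact space $L \times L$ under the continuous surjection $(u,v) \mapsto \{u,v\}$; and each $\mathcal{F}(Y,T(d))$ is closed in $\fin_2(Y)$, because the induced evaluation $\{u,v\} \mapsto T(d)(u,v)$ is continuous (it factors the continuous, symmetric map $T(d)$ through the quotient map $Y^2 \to \fin_2(Y)$) and $\mathcal{F}(Y,T(d))$ is the preimage of $\{\|T(d)\|\}$ under it.

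Putting these together, every set $\mathcal{F}(Y,T(d)) \cap \mathcal{F}(Y,T(d_0))$ is closed in the compact space $\fin_2(L)$, and the collection of all of them (over $d$ in the index set) still has the finite intersection property: any finite subfamily corresponds to finitely many metrics which, together with $d_0$, have nonempty common $\mathcal{F}$-intersection by Lemma~\ref{f.i.p.}. By compactness of $\fin_2(L)$ and \cite[Theorem~3.1.1]{E}, the total intersection $\bigcap_d (\mathcal{F}(Y,T(d)) \cap \mathcal{F}(Y,T(d_0)))$ is nonempty. Since $d_0$ is itself an index, this intersection coincides with $\bigcap_{d \in \mathcal{P}(X,\{x,y\}) \cap {\rm Pc}(X)} \mathcal{F}(Y,T(d))$, which is therefore nonempty and the lemma follows. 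The key point to verify carefully is the localization step: confirming that the single metric $d_0$ confines the relevant part of the hyperspace to the compact set $\fin_2(L)$ while simultaneously preserving the closedness of each $\mathcal{F}(Y,T(d))$ and the finite intersection property inherited from Lemma~\ref{f.i.p.}.
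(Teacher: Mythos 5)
Your proof is correct and follows essentially the same route as the paper's: fix $d_0$ in the index set, use $T(d_0)\in{\rm Pc}(Y)$ to confine $\mathcal{F}(Y,T(d_0))$ to the compact hyperspace $\fin_2(L)$, verify each $\mathcal{F}(Y,T(d))$ is closed, and apply the finite intersection property via Lemma~\ref{f.i.p.}. The only cosmetic differences are that you justify closedness by factoring $T(d)$ through the quotient $Y^2\to\fin_2(Y)$ instead of the paper's direct Vietoris-neighborhood argument, and you spell out the inclusion ${\rm Pp}(X)\subset{\rm Pc}(X)$ that the paper leaves implicit.
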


\begin{proof}
Remark that $\mathcal{P}(X,\{x,y\}) \cap {\rm Pc}(X) \neq \emptyset$ by virtue of Lemma~\ref{transl.}.
Fix any $d_0 \in \mathcal{P}(X,\{x,y\}) \cap {\rm Pc}(X)$,
 so $T(d_0) \in {\rm Pc}(Y)$ and there exists a compact subset $L$ of $Y$ such that for any $z, w \in Y$ with $T(d_0)(z,w) = \|T(d_0)\|$, $z, w \in L$.
Since $L$ is compact and
 $$i : L^2 \ni (z,w) \mapsto \{z,w\} \in \fin_2(L)$$
 is surjective and continuous due to \cite[Lemma~5.3.4]{vM2},
 $\fin_2(L)$ is also compact.
Observe that $\mathcal{F}(Y,T(d))$ is closed in $\fin_2(Y)$ for every $d \in \mathcal{P}(X,\{x,y\})$.
Indeed, fix any $\{z,w\} \in \fin_2(Y) \setminus \mathcal{F}(Y,T(d))$.
Then $T(d)(z,w) < \|T(d)\|$.
Since $T(d)$ is continuous,
 we can find open neighborhoods $U$ of $z$ and $V$ of $w$ such that if $u \in U$ and $v \in V$,
 then $T(d)(u,v) < \|T(d)\|$.
Remark that the subset
 $$\mathcal{U} = \{\{u,v\} \in \fin_2(Y) \mid \{u,v\} \cap U \neq \emptyset, \{u,v\} \cap V \neq \emptyset, \text{ and } \{u,v\} \subset U \cup V\}$$
 is an open neighborhood of $\{z,w\}$ in $\fin_2(Y)$.
If $\{u,v\} \in \mathcal{U}$,
 then we may assume that $u \in U$ and $v \in V$,
 and hence $T(d)(u,v) < \|T(d)\|$.
Therefore $\{u,v\} \in \fin_2(Y) \setminus \mathcal{F}(Y,T(d))$,
 which means that $\mathcal{F}(Y,T(d))$ is closed in $\fin_2(Y)$.
Thus the set $\mathcal{F}(Y,T(d_0)) \subset \fin_2(L)$ is compact.
According to Lemma~\ref{f.i.p.}, the family
 $$\{\mathcal{F}(Y,T(d)) \cap \mathcal{F}(Y,T(d_0)) \mid d \in \mathcal{P}(X,\{x,y\})\}$$
 has the finite intersection property,
 and hence
 $$\bigcap_{d \in \mathcal{P}(X,\{x,y\}) \cap {\rm Pc}(X)} \mathcal{F}(Y,T(d)) = \bigcap_{d \in \mathcal{P}(X,\{x,y\}) \cap {\rm Pc}(X)} (\mathcal{F}(Y,T(d)) \cap \mathcal{F}(Y,T(d_0))) \neq \emptyset.$$
We finish the proof.
\end{proof}

We have certain uniqueness of peaks of metrics as follows:

\begin{lemma}\label{peak}
Suppose that $Z$ is a non-degenerate metrizable space.
For all doubletons $\{x,y\}, \{x',y'\} \in \doubl(Z)$, if
 $$\mathcal{P}(Z,\{x,y\}) \cap {\rm Pp}(Z) \subset \mathcal{P}(Z,\{x',y'\}),$$
 then $\{x,y\} = \{x',y'\}$.
\end{lemma}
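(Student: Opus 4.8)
The plan is to exploit the defining feature of ${\rm Pp}(Z)$: an admissible metric $d \in {\rm Pp}(Z)$ attains its norm on a \emph{single} doubleton. So once I produce one concrete witness $d \in \mathcal{P}(Z,\{x,y\}) \cap {\rm Pp}(Z)$, the hypothesized inclusion will immediately pin down $\{x',y'\}$ as that same unique maximizing doubleton, and the conclusion follows by unwinding definitions.

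First I would check that the set $\mathcal{P}(Z,\{x,y\}) \cap {\rm Pp}(Z)$ is non-empty, since otherwise the inclusion would be vacuous and useless. This is exactly what Lemma~\ref{transl.} delivers: applying it to the zero pseudometric $\0 \in \pseudo(Z)$ together with the doubleton $\{x,y\} \in \doubl(Z)$ produces some $\rho \in \mathcal{P}(Z,\{x,y\})$ with $\0 + \rho = \rho \in \mathcal{P}(Z,\{x,y\}) \cap {\rm Pp}(Z)$. Here I use that $Z$ is non-degenerate, so that $\doubl(Z) \neq \emptyset$ and the lemma genuinely applies. This non-emptiness is really the only step carrying any content; everything afterward is formal.

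Next I would fix such a $d \in \mathcal{P}(Z,\{x,y\}) \cap {\rm Pp}(Z)$ and read off its maximizing set. By the definition of $\mathcal{P}(Z,\{x,y\})$ we have $d(x,y) = \|d\|$, and since $\{x,y\} \in \doubl(Z)$ we have $x \neq y$, so $\{x,y\}$ is an honest doubleton attaining the norm. Because $d \in {\rm Pp}(Z)$, there is only one such doubleton, which forces $\mathcal{F}(Z,d) = \{\{x,y\}\}$.

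Finally, I would invoke the inclusion hypothesis $\mathcal{P}(Z,\{x,y\}) \cap {\rm Pp}(Z) \subset \mathcal{P}(Z,\{x',y'\})$, which gives $d \in \mathcal{P}(Z,\{x',y'\})$, i.e. $d(x',y') = \|d\|$; moreover $x' \neq y'$ since $\{x',y'\} \in \doubl(Z)$. Thus $\{x',y'\}$ is also a doubleton attaining the norm of $d$, so $\{x',y'\} \in \mathcal{F}(Z,d) = \{\{x,y\}\}$, whence $\{x',y'\} = \{x,y\}$, as desired. I do not anticipate any real obstacle: the uniqueness built into ${\rm Pp}(Z)$ does all the work once Lemma~\ref{transl.} guarantees a witness exists.
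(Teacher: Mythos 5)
Your proof is correct, and it follows a genuinely different, more economical route than the paper's. The paper argues by contradiction: assuming $\{x,y\} \neq \{x',y'\}$, it invokes Theorem~\ref{ext.} to produce $d \in \pseudo(Z)$ with $d(x,y) = 1 = \|d\|$ and $d(x',y') = 0$, then adds an admissible metric $\rho$ peaking only at $(x,y)$ (built by the argument of Lemma~\ref{transl.}), and checks that $d + \rho \in (\mathcal{P}(Z,\{x,y\}) \cap {\rm Pp}(Z)) \setminus \mathcal{P}(Z,\{x',y'\})$, contradicting the inclusion. You dispense with the separating witness altogether: Lemma~\ref{transl.} applied to $\0$ gives non-emptiness of $\mathcal{P}(Z,\{x,y\}) \cap {\rm Pp}(Z)$, and for any member $d$ of this set the uniqueness built into the definition of ${\rm Pp}(Z)$ pins the norm-attaining pair down to $\{x,y\}$ (admissibility plus non-degeneracy of $Z$ give $\|d\| > 0$, so no singleton can attain the norm), whence the hypothesis forces $\{x',y'\} = \{x,y\}$. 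Your argument is direct rather than by contradiction and avoids Theorem~\ref{ext.} entirely; it also exposes a redundancy in the paper's proof, since the summand $d$ constructed there is superfluous --- the peaking metric $\rho$ by itself already lies in $\mathcal{P}(Z,\{x,y\}) \cap {\rm Pp}(Z)$ yet outside $\mathcal{P}(Z,\{x',y'\})$. What the paper's construction offers is an explicit metric vanishing on the competing pair, in the spirit of the peaking-function technique running through the paper; for this particular lemma, your leaner argument fully suffices.
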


\begin{proof}
Suppose not,
 so we may assume that $x \notin \{x',y'\}$ or $x' \notin \{x,y\}$.
By Theorem~\ref{ext.}, we can obtain $d \in \pseudo(X)$ such that $d(x,y) = 1$, $d(x',y') = 0$, and $\|d\| = 1$.
Moreover, there exists an admissible metric $\rho \in \pseudo(X)$ such that $\rho(z,w) < \rho(x,y) = \|\rho\|$ for any $(z,w) \in Z^2 \setminus \{(x,y),(y,x)\}$ by the same argument as Lemma~\ref{transl.}.
Then due to Lemma~\ref{add.},
 $$d + \rho \in (\mathcal{P}(Z,\{x,y\}) \cap {\rm Pp}(Z)) \setminus \mathcal{P}(Z,\{x',y'\}),$$
 which is a contradiction.
Hence $\{x,y\} = \{x',y'\}$.
\end{proof}

Now the following corresponding between $\doubl(X)$ and $\doubl(Y)$ will be given.

\begin{proposition}\label{bij.}
Suppose that $T({\rm Pc}(X)) = {\rm Pc}(Y)$.
There exists a bijection $\Phi : \doubl(Y) \to \doubl(X)$ such that
 $$T(\mathcal{P}(X,\{x,y\}) \cap {\rm Pc}(X)) = \mathcal{P}(Y,\Phi^{-1}(\{x,y\})) \cap {\rm Pc}(Y)$$
 for every $\{x,y\} \in \doubl(X)$ and
 $$T^{-1}(\mathcal{P}(Y,\{z,w\}) \cap {\rm Pc}(Y)) = \mathcal{P}(X,\Phi(\{z,w\})) \cap {\rm Pc}(X)$$
 for every $\{z,w\} \in \doubl(Y)$.
\end{proposition}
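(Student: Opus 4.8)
The plan is to read the bijection directly off the peak-set correspondence, using Lemma~\ref{intersect.} to produce a candidate doubleton for each $\{x,y\}$ and Lemma~\ref{peak} to pin it down uniquely. Fix $\{x,y\} \in \doubl(X)$. By Lemma~\ref{intersect.} the intersection $\bigcap_{d \in \mathcal{P}(X,\{x,y\}) \cap {\rm Pc}(X)} \mathcal{F}(Y,T(d))$ is non-empty, so I may choose an element $\{z,w\} \in \fin_2(Y)$ lying in every $\mathcal{F}(Y,T(d))$; equivalently $T(d) \in \mathcal{P}(Y,\{z,w\})$ for all $d \in \mathcal{P}(X,\{x,y\}) \cap {\rm Pc}(X)$. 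First I would check that $\{z,w\}$ is genuinely a doubleton: if $z = w$, then $T(d)(z,w) = 0 = \|T(d)\|$ would force $T(d) = \0 = T(\0)$, hence $d = \0$, for every such $d$; but Lemma~\ref{transl.} supplies non-zero members of $\mathcal{P}(X,\{x,y\}) \cap {\rm Pp}(X) \subset \mathcal{P}(X,\{x,y\}) \cap {\rm Pc}(X)$, a contradiction. Thus $\{z,w\} \in \doubl(Y)$, and since $T({\rm Pc}(X)) = {\rm Pc}(Y)$ we obtain
\[ T(\mathcal{P}(X,\{x,y\}) \cap {\rm Pc}(X)) \subset \mathcal{P}(Y,\{z,w\}) \cap {\rm Pc}(Y). \]

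Next I would upgrade this inclusion to an equality and show $\{z,w\}$ is unique. Applying the symmetric form of Lemma~\ref{intersect.} to $T^{-1}$ (legitimate since $T^{-1}({\rm Pc}(Y)) = {\rm Pc}(X)$) yields a doubleton $\{x',y'\} \in \doubl(X)$ with $T^{-1}(\mathcal{P}(Y,\{z,w\}) \cap {\rm Pc}(Y)) \subset \mathcal{P}(X,\{x',y'\}) \cap {\rm Pc}(X)$. Chaining the two inclusions gives
\[ \mathcal{P}(X,\{x,y\}) \cap {\rm Pc}(X) \subset T^{-1}(\mathcal{P}(Y,\{z,w\}) \cap {\rm Pc}(Y)) \subset \mathcal{P}(X,\{x',y'\}), \]
whence $\mathcal{P}(X,\{x,y\}) \cap {\rm Pp}(X) \subset \mathcal{P}(X,\{x',y'\})$ because ${\rm Pp}(X) \subset {\rm Pc}(X)$. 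Lemma~\ref{peak} then forces $\{x',y'\} = \{x,y\}$, so the chain closes up and both inclusions become equalities, giving $T^{-1}(\mathcal{P}(Y,\{z,w\}) \cap {\rm Pc}(Y)) = \mathcal{P}(X,\{x,y\}) \cap {\rm Pc}(X)$. Uniqueness of $\{z,w\}$ is then immediate: any two valid choices $\{z_1,w_1\}, \{z_2,w_2\}$ yield the same set $T(\mathcal{P}(X,\{x,y\}) \cap {\rm Pc}(X))$, hence $\mathcal{P}(Y,\{z_1,w_1\}) \cap {\rm Pp}(Y) \subset \mathcal{P}(Y,\{z_2,w_2\})$, and Lemma~\ref{peak} applied on $Y$ gives $\{z_1,w_1\} = \{z_2,w_2\}$. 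This well-defines a map $\Psi : \doubl(X) \to \doubl(Y)$ with $T(\mathcal{P}(X,\{x,y\}) \cap {\rm Pc}(X)) = \mathcal{P}(Y,\Psi(\{x,y\})) \cap {\rm Pc}(Y)$.

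Finally I would verify that $\Psi$ is bijective and set $\Phi = \Psi^{-1}$. Injectivity follows as above: if $\Psi(\{x_1,y_1\}) = \Psi(\{x_2,y_2\})$, bijectivity of $T$ gives $\mathcal{P}(X,\{x_1,y_1\}) \cap {\rm Pc}(X) = \mathcal{P}(X,\{x_2,y_2\}) \cap {\rm Pc}(X)$, and Lemma~\ref{peak} yields $\{x_1,y_1\} = \{x_2,y_2\}$. For surjectivity I would run the entire construction symmetrically with $T^{-1}$ in place of $T$, producing $\Psi' : \doubl(Y) \to \doubl(X)$ with $T^{-1}(\mathcal{P}(Y,\{z,w\}) \cap {\rm Pc}(Y)) = \mathcal{P}(X,\Psi'(\{z,w\})) \cap {\rm Pc}(X)$; applying $T$ and invoking the uniqueness just proved shows $\Psi \circ \Psi' = \id_{\doubl(Y)}$ and $\Psi' \circ \Psi = \id_{\doubl(X)}$, so $\Psi$ is a bijection with $\Psi^{-1} = \Psi'$. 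Putting $\Phi = \Psi'$, the two displayed identities in the statement read off directly, since $\Phi^{-1}(\{x,y\}) = \Psi(\{x,y\})$ and $\Phi(\{z,w\}) = \Psi'(\{z,w\})$.

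The main obstacle is the sandwiching step of the second paragraph: one must feed the inclusion coming from $T$ into the inclusion coming from $T^{-1}$ and then collapse the resulting chain with Lemma~\ref{peak}. The crux is exploiting ${\rm Pp} \subset {\rm Pc}$, which is precisely what allows a ${\rm Pc}$-level inclusion to be tested against the ${\rm Pp}$-level hypothesis of Lemma~\ref{peak}; everything else (non-emptiness, the doubleton check, and bijectivity) is bookkeeping organized around this single comparison.
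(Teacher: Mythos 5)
Your proof is correct and takes essentially the same route as the paper's: Lemma~\ref{intersect.} supplies the candidate doubleton, the chained inclusions are collapsed by Lemma~\ref{peak} (via ${\rm Pp} \subset {\rm Pc}$) to get equality and uniqueness, and the symmetric construction with $T^{-1}$ gives bijectivity. The only cosmetic difference is that you build the map $\doubl(X) \to \doubl(Y)$ first and invert it, whereas the paper constructs $\Phi : \doubl(Y) \to \doubl(X)$ directly (and checks nondegeneracy of the candidate pair via norm preservation rather than injectivity of $T$).
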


\begin{proof}
For each $\{z,w\} \in \doubl(Y)$, by virtue of Lemma~\ref{intersect.}, we can choose $\{x,y\} \in \fin_2(X)$ so that
 $$\{x,y\} \in \bigcap_{\rho \in \mathcal{P}(Y,\{z,w\}) \cap {\rm Pc}(Y)} \mathcal{F}(X,T^{-1}(\rho)).$$
Note that $T^{-1}(\rho)(x,y) = \|T^{-1}(\rho)\|$ for all $\rho \in \mathcal{P}(Y,\{z,w\}) \cap {\rm Pc}(Y)$,
 so
 $$T^{-1}(\mathcal{P}(Y,\{z,w\}) \cap {\rm Pc}(Y)) \subset \mathcal{P}(X,\{x,y\}) \cap {\rm Pc}(X).$$
Furthermore, due to Proposition~\ref{norm},
 $$T^{-1}(\rho)(x,y) = \|T^{-1}(\rho)\| = \|\rho\| > 0,$$
 and hence $x \neq y$.
Similarly, there exists $\{u,v\} \in \doubl(Y)$ such that
 $$T(\mathcal{P}(X,\{x,y\}) \cap {\rm Pc}(X)) \subset \mathcal{P}(Y,\{u,v\}) \cap {\rm Pc}(Y).$$
Then we get that
\begin{multline*}
 \mathcal{P}(Y,\{z,w\}) \cap {\rm Pc}(Y) = T(T^{-1}(\mathcal{P}(Y,\{z,w\}) \cap {\rm Pc}(Y)))\\
 \subset T(\mathcal{P}(X,\{x,y\}) \cap {\rm Pc}(X)) \subset \mathcal{P}(Y,\{u,v\}) \cap {\rm Pc}(Y),
\end{multline*}
 and hence $\{z,w\} = \{u,v\}$ due to Lemma~\ref{peak}.
Therefore
 $$T^{-1}(\mathcal{P}(Y,\{z,w\}) \cap {\rm Pc}(Y)) = T^{-1}(T(\mathcal{P}(X,\{x,y\}) \cap {\rm Pc}(X))) = \mathcal{P}(X,\{x,y\}) \cap {\rm Pc}(X).$$
Assume that
 $$\{x',y'\} = \bigcap_{\rho \in \mathcal{P}(Y,\{z,w\}) \cap {\rm Pc}(Y)} \mathcal{F}(X,T^{-1}(\rho)),$$
 so we get that
 $$\mathcal{P}(X,\{x,y\}) \cap {\rm Pc}(X) = T^{-1}(\mathcal{P}(Y,\{z,w\}) \cap {\rm Pc}(Y)) = \mathcal{P}(X,\{x',y'\}) \cap {\rm Pc}(X).$$
Using Lemma~\ref{peak} again,
 we have that $\{x,y\} = \{x',y'\}$.
Hence we can define a map $\Phi : \doubl(Y) \to \doubl(X)$ by $\Phi(\{z,w\}) = \{x,y\}$.

On the other hand, fix any $\{x,y\} \in \doubl(X)$.
By the same argument as the above, we can uniquely choose $\{z,w\} \in \doubl(Y)$ so that
 $$T(\mathcal{P}(X,\{x,y\}) \cap {\rm Pc}(X)) = \mathcal{P}(Y,\{z,w\}) \cap {\rm Pc}(Y).$$
Then observe that
\begin{multline*}
 \mathcal{P}(X,\{x,y\}) \cap {\rm Pc}(X) = T^{-1}(T(\mathcal{P}(X,\{x,y\}) \cap {\rm Pc}(X)))\\
 = T^{-1}(\mathcal{P}(Y,\{z,w\}) \cap {\rm Pc}(Y)) = \mathcal{P}(X,\Phi(\{z,w\})) \cap {\rm Pc}(X),
\end{multline*}
 which implies that $\{x,y\} = \Phi(\{z,w\})$ by  Lemma~\ref{peak}.
Consequently, $\Phi$ is a bijection.
The proof is completed.
\end{proof}

\begin{remark}\label{card.}
Due to the same argument, we can also obtain a bijection $\Phi : \doubl(Y) \to \doubl(X)$ as in the above proposition under the assumption $T({\rm Pp}(X)) = {\rm Pp}(Y)$.
If $T(\metr(X)) = \metr(Y)$,
 then the cardinality of $X$ is coincident with the one of $Y$.
\end{remark}

\section{Constructing a bijection between $X$ and $Y$}

From now on, assume that $T(\metr(X)) = \metr(Y)$,
 and let $\Phi : \doubl(Y) \to \doubl(X)$ be a bijection as in Proposition~\ref{bij.}.
Define $d(\Phi\{z,w\}) = d(x,y)$ for any $d \in \pseudo(X)$ and any $\{z,w\} \in \doubl(Y)$ with $\Phi(\{z,w\}) = \{x,y\} \in \doubl(X)$.
In this section, we will construct a bijection $\phi : Y \to X$ such that $\Phi(\{x,y\}) = \{\phi(x),\phi(y)\}$ for any $x, y \in Y$ and $\Phi^{-1}(\{z,w\}) = \{\phi^{-1}(z),\phi^{-1}(w)\}$ for any $z, w \in X$,
 and more, such that
 \begin{equation}
  T(d)(x,y) = d(\phi(x),\phi(y)) \tag{$\ast$}
 \end{equation}
 holds for every $d \in \pseudo(X)$.
The map $\Phi$ induces the following equality between $d \in \pseudo(X)$ and $T(d) \in \pseudo(Y)$.

\begin{proposition}\label{eq.}
The equality $T(d)(x,y) = d(\Phi(\{x,y\}))$ holds for every $d \in \pseudo(X)$ and every $\{x,y\} \in \doubl(Y)$.
\end{proposition}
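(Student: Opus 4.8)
The plan is to fix a doubleton $\{x,y\} \in \doubl(Y)$, set $\{u,v\} = \Phi(\{x,y\}) \in \doubl(X)$ (so that $d(\Phi(\{x,y\})) = d(u,v)$ by definition), and establish the two one-sided estimates $T(d)(x,y) \ge d(u,v)$ and $T(d)(x,y) \le d(u,v)$ for every $d \in \pseudo(X)$. The crux is that $T$ is merely an isometry and need not be linear, so $T(d + \rho)$ cannot be split as $T(d) + T(\rho)$; instead I would extract information from the isometry inequality and then recover the reverse estimate from a symmetric argument applied to $T^{-1}$.

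For the lower bound, fix $d \in \pseudo(X)$ and apply Lemma~\ref{transl.} to $d$ and $\{u,v\}$ to obtain $\rho \in \mathcal{P}(X,\{u,v\})$ with $d + \rho \in \mathcal{P}(X,\{u,v\}) \cap {\rm Pp}(X)$. Since ${\rm Pp}(X) \subset {\rm Pc}(X)$, we have $d + \rho \in \mathcal{P}(X,\{u,v\}) \cap \metr(X)$ whether $\metr$ denotes ${\rm Pc}$ or ${\rm Pp}$. Proposition~\ref{bij.} (together with Remark~\ref{card.} in the ${\rm Pp}$ case) then gives $T(d + \rho) \in \mathcal{P}(Y,\Phi^{-1}(\{u,v\})) = \mathcal{P}(Y,\{x,y\})$, so that, using Proposition~\ref{norm} and $\rho(u,v) = \|\rho\|$,
\[
 T(d + \rho)(x,y) = \|T(d + \rho)\| = \|d + \rho\| = (d + \rho)(u,v) = d(u,v) + \|\rho\|.
\]
The isometry inequality $T(d + \rho)(x,y) - T(d)(x,y) \le \|T(d + \rho) - T(d)\| = \|\rho\|$ now yields $T(d)(x,y) \ge d(u,v)$, and this holds for all $d \in \pseudo(X)$ and all $\{x,y\} \in \doubl(Y)$.

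For the reverse inequality I would use symmetry. The inverse $T^{-1} : \pseudo(Y) \to \pseudo(X)$ is again an isometry with $T^{-1}(\metr(Y)) = \metr(X)$, so Proposition~\ref{bij.} provides a bijection $\doubl(X) \to \doubl(Y)$ governing it; comparing the resulting description of $T^{-1}(\mathcal{P}(Y,\{z,w\}) \cap \metr(Y))$ with the one in Proposition~\ref{bij.} for $T$ and invoking the uniqueness of peaks (Lemma~\ref{peak}), this bijection must be $\Phi^{-1}$. Applying the lower bound already proved to $T^{-1}$ and its bijection $\Phi^{-1}$ gives $T^{-1}(\rho)(u,v) \ge \rho(\Phi^{-1}(\{u,v\}))$ for all $\rho \in \pseudo(Y)$ and $\{u,v\} \in \doubl(X)$. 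Taking $\rho = T(d)$ and $\{u,v\} = \Phi(\{x,y\})$, so that $\Phi^{-1}(\{u,v\}) = \{x,y\}$, produces
\[
 d(u,v) = T^{-1}(T(d))(u,v) \ge T(d)(x,y).
\]
Combining the two estimates gives $T(d)(x,y) = d(u,v) = d(\Phi(\{x,y\}))$, as desired.

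I expect the only genuine obstacle to be conceptual rather than computational: the lower bound on its own does not determine the value $T(d)(x,y)$, and one must recognise that pairing it with the same estimate for $T^{-1}$ closes the gap. The supporting technical point is the identification of the bijection associated with $T^{-1}$ as $\Phi^{-1}$, which rests on Lemma~\ref{peak}; everything else is a direct application of Lemma~\ref{transl.}, Proposition~\ref{norm}, and Proposition~\ref{bij.}.
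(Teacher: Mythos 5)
Your proof is correct and uses essentially the same argument as the paper: perturb by a peaking pseudometric from Lemma~\ref{transl.} to land in $\mathcal{P}\cap\metr$, evaluate via Propositions~\ref{bij.} and \ref{norm}, and transfer the estimate to the unperturbed function with the isometry inequality; the paper simply runs this on the $Y$ side to get $T(d)(x,y)\le d(\Phi(\{x,y\}))$ first and dismisses the reverse bound with ``similarly,'' which is exactly your explicit half. Your detour through Lemma~\ref{peak} to identify the bijection attached to $T^{-1}$ is harmless but unnecessary, since the statement of Proposition~\ref{bij.} is already symmetric in $T$ and $T^{-1}$ (with $\Phi$ and $\Phi^{-1}$).
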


\begin{proof}
First, we prove that $T(d)(x,y) \leq d(\Phi(\{x,y\}))$.
Using Lemma~\ref{transl.}, we can take $\rho \in \mathcal{P}(Y,\{x,y\})$ so that $T(d) + \rho \in \mathcal{P}(Y,\{x,y\}) \cap \metr(Y)$.
By Proposition~\ref{bij.},
 $$T^{-1}(\mathcal{P}(Y,\{x,y\}) \cap \metr(Y)) = \mathcal{P}(X,\Phi(\{x,y\})) \cap \metr(X),$$
 and hence $T^{-1}(T(d) + \rho) \in \mathcal{P}(X,\Phi(\{x,y\}))$.
Then due to Proposition~\ref{norm},
\begin{align*}
 T(d)(x,y) + \rho(x,y) &= (T(d) + \rho)(x,y) = \|T(d) + \rho\| = \|T^{-1}(T(d) + \rho)\|\\
 &= T^{-1}(T(d) + \rho)(\Phi(\{x,y\})).
\end{align*}
Since $T$ is isometry,
 \begin{align*}
  T(d)(x,y) + \rho(x,y) - d(\Phi(\{x,y\})) &= T^{-1}(T(d) + \rho)(\Phi(\{x,y\})) - d(\Phi(\{x,y\}))\\
  &\leq \|T^{-1}(T(d) + \rho) - d\| = \|T^{-1}(T(d) + \rho) - T^{-1}(T(d))\|\\
  &= \|T(d) + \rho - T(d)\| = \|\rho\| = \rho(x,y),
 \end{align*}
 which implies that $T(d)(x,y) \leq d(\Phi(\{x,y\}))$.
Similarly, we get that $T(d)(x,y) \geq d(\Phi(\{x,y\}))$.
The proof is completed.
\end{proof}

Now we shall construct a bijection from $Y$ to $X$ that is compatible with the isometry $T$.

\begin{lemma}\label{singleton}
Assume that the cardinality of $Y$ is greater than $2$.
For each point $y \in Y$, there uniquely exists a point $x \in X$ such that $\{x\} = \bigcap_{z \in Y \setminus \{y\}} \Phi(\{y,z\})$.
\end{lemma}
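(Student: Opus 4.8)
The plan is to recover the point $x$ as the common vertex of all the doubletons $\Phi(\{y,z\})$, $z \in Y \setminus \{y\}$, by first proving that these doubletons pairwise intersect and then showing that they actually share a single common point. The essential tool is that $T(d)$ is genuinely a pseudometric on $Y$ for every $d \in \pseudo(X)$, so it obeys the triangle inequality, which via Proposition~\ref{eq.} translates into relations among the values of $d$ on the doubletons $\Phi(\{u,v\})$.

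First I would prove adjacency: for distinct $z, z' \in Y \setminus \{y\}$ the doubletons $\Phi(\{y,z\})$ and $\Phi(\{y,z'\})$ meet. Suppose not, so $\Phi(\{y,z\}) = \{a,b\}$ and $\Phi(\{y,z'\}) = \{c,e\}$ are disjoint; write $E_0 = \Phi(\{z,z'\})$, which differs from $\{a,b\}$ since $\Phi$ is injective and $\{z,z'\} \ne \{y,z\}$. Applying the triangle inequality of the pseudometric $T(d)$ at the triple $\{y,z,z'\}$, with $z'$ in the middle, gives by Proposition~\ref{eq.} that $d(a,b) \le d(c,e) + d(E_0)$ for every $d \in \pseudo(X)$. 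Because $E_0 \ne \{a,b\}$, at least one of $a,b$, say $a$, lies outside $E_0$, and $a \notin \{c,e\}$ by disjointness; using Theorem~\ref{ext.} I would extend the finite pseudometric that isolates $a$ from the remaining points (inter-cluster distance $1$) to some $\tilde d \in \pseudo(X)$, so that $\tilde d(a,b) = 1$ while $\tilde d(c,e) = \tilde d(E_0) = 0$, contradicting the displayed inequality.

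Next, a purely combinatorial dichotomy for a pairwise-intersecting family of two-point sets says that it is either a \emph{star} (all members share a common point) or consists of exactly the three edges of a triangle. Since $|Y| > 2$, the family $\{\Phi(\{y,z\}) \mid z \in Y \setminus \{y\}\}$ has at least two members, all distinct by injectivity of $\Phi$; in the star case, with common point $p$, distinctness of the members forbids two shared points, so $\bigcap_{z} \Phi(\{y,z\}) = \{p\}$, which is the desired, manifestly unique, point $x$. When $|Y| \ge 5$ the family has at least four distinct members and hence cannot be contained in a triangle, so the star case always occurs; when $|Y| = 3$ the family has just two members and is automatically a star. This leaves only the case $|Y| = 4$, which I expect to be the main obstacle: here the three doubletons $\Phi(\{y,z_1\}), \Phi(\{y,z_2\}), \Phi(\{y,z_3\})$ could a priori form a triangle on three points $a,b,c \in X$, a configuration that adjacency preservation alone cannot exclude, as it is precisely the exceptional symmetry of the line graph of $K_4$.

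To rule this out I would again invoke the metric structure. By Remark~\ref{card.}, $|X| = |Y| = 4$, so $X = \{a,b,c,h\}$ for a fourth point $h$. The doubleton $E_0 = \Phi(\{z_1,z_2\})$ is distinct from all three of the above $y$-edges by injectivity, hence is not an edge of the triangle on $\{a,b,c\}$ and must contain $h$. Extending, via Theorem~\ref{ext.}, the pseudometric that isolates $h$ from $\{a,b,c\}$ yields $\tilde d \in \pseudo(X)$ vanishing on all three $y$-edges but with $\tilde d(E_0) = 1$. The triangle inequality of $T(\tilde d)$ at $\{y,z_1,z_2\}$, namely $T(\tilde d)(z_1,z_2) \le T(\tilde d)(y,z_1) + T(\tilde d)(y,z_2)$, then reads $\tilde d(E_0) \le \tilde d(\Phi(\{y,z_1\})) + \tilde d(\Phi(\{y,z_2\})) = 0$, that is $1 \le 0$, a contradiction. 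Thus the triangle case never occurs, the family is a star, and $\bigcap_{z \in Y \setminus \{y\}} \Phi(\{y,z\})$ is the required singleton.
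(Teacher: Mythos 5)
Your proof is correct, and while its first half coincides with the paper's, its second half takes a genuinely different route. The adjacency step is essentially the paper's own first step: both arguments build a pseudometric on a finite subset of $X$, extend it by Theorem~\ref{ext.}, and contradict the triangle inequality of the pseudometric $T(d)$ on $Y$ via Proposition~\ref{eq.} (the paper uses the values $1,1,3$ to get $3\leq 2$; your indicator-type pseudometric gives $1\leq 0$ -- same mechanism). The divergence is in excluding the non-star configuration. The paper does this uniformly, with no case analysis on cardinality: if some $\Phi(\{y,z\})=\{w_1,w_2\}$ misses the common point $x$ of $\Phi(\{y,z_1\})=\{x,w_1\}$ and $\Phi(\{y,z_2\})=\{x,w_2\}$, it picks a pseudometric $d\in\pseudo(Y)$ with $d(y,z_1)=d(y,z_2)=1$ and $d(y,z)=3$, and applies the triangle inequality of the pseudometric $T^{-1}(d)$ on $X$ at the points $x,w_1,w_2$ -- that is, it runs the same trick in the reverse direction through $T^{-1}$. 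You instead stay with forward applications of $T$: the star-versus-triangle dichotomy for pairwise-intersecting doubletons shows a triangle can only occur when the family has exactly three members, i.e. $|Y|=4$ (for $|Y|=3$ the family of two intersecting distinct doubletons is a star, for $|Y|\geq 5$ there are too many distinct members), and in the remaining case you invoke Remark~\ref{card.} to get $|X|=4$, force $\Phi(\{z_1,z_2\})$ to contain the fourth point $h$ of $X$ by injectivity of $\Phi$, and isolate $h$ to violate the triangle inequality of $T(\tilde d)$ at $y,z_1,z_2$. Both arguments are complete; the paper's reverse-direction argument is shorter and needs neither the cardinality remark nor any finiteness considerations, while yours pinpoints the exact combinatorial obstruction (the exceptional symmetry of the line graph of $K_4$) and confines the metric input of this step to the forward direction, at the price of a case analysis and a dependence on Remark~\ref{card.}.
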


\begin{proof}
First, fix distinct points $z_1$ and $z_2$ in $Y \setminus \{y\}$,
 so we can uniquely find a point $x \in X$ such that $\{x\} = \Phi(\{y,z_1\}) \cap \Phi(\{y,z_2\})$.
In fact, since $\Phi$ is an injection,
 the cardinality of $\Phi(\{y,z_1\}) \cap \Phi(\{y,z_2\})$ is less than $2$.
Suppose that $\Phi(\{y,z_1\})$ does not meet $\Phi(\{y,z_2\})$.
It follows from Theorem~\ref{ext.} that there is $d \in \pseudo(X)$ such that $d(\Phi(\{y,z_1\})) = d(\Phi(\{y,z_2\})) = 1$ and $d(\Phi(\{z_1,z_2\})) = 3$.
Then by Proposition~\ref{eq.},
\begin{align*}
 3 &= d(\Phi(\{z_1,z_2\})) = T(d)(z_1,z_2) \leq T(d)(y,z_1) + T(d)(y,z_2)\\
 &= d(\Phi(\{y,z_1\})) + d(\Phi(\{y,z_2\})) = 2,
\end{align*}
 which is a contradiction.
Thus the doubletons $\Phi(\{y,z_1\})$ and $\Phi(\{y,z_2\})$ intersect at the only point $x \in X$.
Then we can choose distinct points $w_1, w_2 \in X \setminus \{x\}$ so that $\{x,w_1\} = \Phi(\{y,z_1\})$ and $\{x,w_2\} = \Phi(\{y,z_2\})$.
Suppose that there exists $z \in Y \setminus \{y\}$ such that $\Phi(\{y,z\})$ does not contain the point $x$,
 so by the above argument, $\{w_1,w_2\} = \Phi(\{y,z\})$.
According to Theorem~\ref{ext.}, taking a pseudometric $d \in \pseudo(Y)$ such that $d(y,z_1) = d(y,z_2) = 1$ and $d(y,z) = 3$, we have that by Proposition~\ref{eq.},
\begin{align*}
 3 &= d(y,z) = T^{-1}(d)(\Phi(\{y,z\})) = T^{-1}(d)(w_1,w_2) \leq T^{-1}(d)(x,w_1) + T^{-1}(d)(x,w_2)\\
 &= T^{-1}(d)(\Phi(\{y,z_1\})) + T^{-1}(d)(\Phi(\{y,z_2\})) = d(y,z_1) + d(y,z_2) = 2.
\end{align*}
This is a contradiction.
Hence $\{x\} = \bigcap_{z \in Y \setminus \{y\}} \Phi(\{y,z\})$.
We complete the proof.
\end{proof}

By virtue of the above lemma, we can define a map $\phi : Y \to X$ by $\{\phi(y)\} = \bigcap_{z \in Y \setminus \{y\}} \Phi(\{y,z\})$ for every $y \in Y$.

\begin{proposition}\label{corresp.}
Suppose that the cardinality of $X$ or $Y$ is greater than $2$.
The map $\phi$ is a bijection,
 and $\Phi(\{x,y\}) = \{\phi(x),\phi(y)\}$ for any $x, y \in Y$ and $\Phi^{-1}(\{z,w\}) = \{\phi^{-1}(z),\phi^{-1}(w)\}$ for any $z, w \in X$.
\end{proposition}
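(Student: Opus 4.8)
The plan is to run the whole argument symmetrically in $X$ and $Y$. First I would observe that by Remark~\ref{card.} the standing hypothesis $T(\metr(X)) = \metr(Y)$ forces $\card X = \card Y$, so the assumption ``the cardinality of $X$ or $Y$ is greater than $2$'' in fact means that both exceed $2$. Hence Lemma~\ref{singleton} applies to the bijection $\Phi$ to produce $\phi : Y \to X$, and, applied to $\Phi^{-1}$ (that is, to $T^{-1}$), it equally produces a map $\psi : X \to Y$ with $\{\psi(z)\} = \bigcap_{w \in X \setminus \{z\}} \Phi^{-1}(\{z,w\})$. Directly from these definitions, $\phi(x), \phi(y) \in \Phi(\{x,y\})$ for every $\{x,y\} \in \doubl(Y)$ (take $z = y$, respectively $z = x$, in the defining intersection), so that $\{\phi(x),\phi(y)\} \subseteq \Phi(\{x,y\})$; symmetrically $\{\psi(z),\psi(w)\} \subseteq \Phi^{-1}(\{z,w\})$.

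The crux is the injectivity of $\phi$, which is exactly what upgrades the inclusion above to an equality. Suppose $\phi(x) = \phi(y) =: p$ with $x \neq y$, and, using $\card Y > 2$, pick $w \in Y \setminus \{x,y\}$. Then $p \in \Phi(\{x,w\})$ and $p \in \Phi(\{y,w\})$, while also $\phi(w) \in \Phi(\{x,w\}) \cap \Phi(\{y,w\})$. If $\phi(w) \neq p$, then each of the doubletons $\Phi(\{x,w\})$ and $\Phi(\{y,w\})$ must equal $\{p,\phi(w)\}$, contradicting the injectivity of $\Phi$ since $\{x,w\} \neq \{y,w\}$. If instead $\phi(w) = p$, then $p$ lies in the three pairwise distinct doubletons $\Phi(\{x,y\})$, $\Phi(\{x,w\})$, $\Phi(\{y,w\})$, whose $\Phi^{-1}$-images are $\{x,y\}$, $\{x,w\}$, $\{y,w\}$; consequently $\bigcap_{c \in X \setminus \{p\}} \Phi^{-1}(\{p,c\})$ is contained in $\{x,y\} \cap \{x,w\} \cap \{y,w\} = \emptyset$, which contradicts the symmetric instance of Lemma~\ref{singleton} asserting that this intersection is the single point $\psi(p)$. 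Hence $\phi$ is injective, so $\phi(x) \neq \phi(y)$ whenever $x \neq y$, and the two-element set $\Phi(\{x,y\})$ must coincide with $\{\phi(x),\phi(y)\}$; by the symmetric argument, $\Phi^{-1}(\{z,w\}) = \{\psi(z),\psi(w)\}$.

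Finally I would identify $\psi$ as the inverse of $\phi$. Given $x \in Y$, choose distinct $y, w \in Y \setminus \{x\}$; applying $\Phi^{-1}$ to $\Phi(\{x,y\}) = \{\phi(x),\phi(y)\}$ and invoking the formula for $\psi$ gives $\{x,y\} = \{\psi(\phi(x)),\psi(\phi(y))\}$, whence $\psi(\phi(x)) \in \{x,y\}$, and likewise $\psi(\phi(x)) \in \{x,w\}$; intersecting forces $\psi(\phi(x)) = x$. Thus $\psi \circ \phi = \id_Y$, and symmetrically $\phi \circ \psi = \id_X$, so $\phi$ is a bijection with $\phi^{-1} = \psi$. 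Substituting $\phi^{-1}$ for $\psi$ in the two displayed set identities yields $\Phi(\{x,y\}) = \{\phi(x),\phi(y)\}$ and $\Phi^{-1}(\{z,w\}) = \{\phi^{-1}(z),\phi^{-1}(w)\}$, as required. I expect the only genuinely delicate point to be the injectivity step, specifically the triple-collapse case $\phi(w) = p$: handling it cleanly is what compels us to bring in the symmetric instance of Lemma~\ref{singleton} (equivalently, the map $\psi$). The containments and the inverse identity are then merely finite-intersection bookkeeping.
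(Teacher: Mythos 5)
Your proof is correct, and it rests on the same two pillars as the paper's: the symmetric instance of Lemma~\ref{singleton} (producing $\psi : X \to Y$ from $\Phi^{-1}$, legitimate since $\card X = \card Y > 2$ by Remark~\ref{card.}) and the injectivity of $\Phi$. But the logical decomposition is genuinely different. The paper never isolates injectivity of $\phi$ at all: it proves $\psi(\phi(y)) = y$ in one stroke, by noting that $\{\phi(y)\} = \Phi(\{y,z_1\}) \cap \Phi(\{y,z_2\})$ exhibits $\Phi(\{y,z_1\})$ and $\Phi(\{y,z_2\})$ as two distinct doubletons through $\phi(y)$, so that applying $\Phi^{-1}$ and using the two-term characterization of $\psi(\phi(y))$ gives $\{y\} = \{y,z_1\} \cap \{y,z_2\} = \{\psi(\phi(y))\}$; bijectivity (hence injectivity) then comes for free, and the set identities $\Phi(\{x,y\}) = \{\phi(x),\phi(y)\}$ drop out of the definitions afterwards. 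You instead make injectivity the crux, proving it by a two-case collision analysis (one case contradicting injectivity of $\Phi$, the other contradicting nonemptiness of the intersection defining $\psi(p)$), then derive the set identities, and only then the composition identities $\psi \circ \phi = \id_Y$, $\phi \circ \psi = \id_X$. Your route is longer and the case analysis is work the paper's argument avoids, but it buys transparency: it pinpoints exactly what a collision $\phi(x) = \phi(y)$ would violate, and it makes the identity $\Phi(\{x,y\}) = \{\phi(x),\phi(y)\}$ the explicit engine of the proof rather than an afterthought. Both arguments are complete; the paper's is the more economical, yours the more self-explanatory.
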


\begin{proof}
Recall that the cardinality of $X$ is coincident with the one of $Y$ according to Remark~\ref{card.}.
By the same argument as Lemma~\ref{singleton}, we can define a map $\psi : X \to Y$ by $\{\psi(x)\} = \bigcap_{z \in X \setminus \{x\}} \Phi^{-1}(\{x,z\})$ for every point $x \in X$.
Then $y = \psi(\phi(y))$ for any $y \in Y$.
Indeed, letting $z_1, z_2 \in Y \setminus \{y\}$ be distinct points, we have that $\{\phi(y)\} = \Phi(\{y,z_1\}) \cap \Phi(\{y,z_2\})$ and
 $$\{y\} = \{y,z_1\} \cap \{y,z_2\} = \Phi^{-1}(\Phi(\{y,z_1\})) \cap \Phi^{-1}(\Phi(\{y,z_2\})) = \{\psi(\phi(y))\}.$$
Similarly, $\phi(\psi(x)) = x$ for each $x \in X$.
Therefore $\phi$ is a bijection and $\phi^{-1} = \psi$.
The latter part follows from the definition of $\phi$ and $\psi$.
\end{proof}

\section{Proof of Main Theorem}

Now we shall show Main Theorem.

\begin{proof}[Proof of Main Theorem]
First, the implications (1) $\Rightarrow$ (2), (1) $\Rightarrow$ (3) and (1) $\Rightarrow$ (4) follow from \cite[Lemma~2.4]{IK}.
Indeed, taking any homeomorphism $\psi : Y \to X$, we can define an isometry $S : \pseudo(X) \to \pseudo(Y)$ by $S(d)(x,y) = d(\psi(x),\psi(y))$ for each $d \in \pseudo(X)$ and for any $x, y \in Y$.
Then $S(\adm(X)) = \adm(Y)$ and $S(\metr(X)) = \metr(Y)$.
Second, since $\adm(X)$ and $\metr(X)$ are dense in $\pseudo(X)$ by Proposition~\ref{dense},
 each isometry in (3) and (4) can be extended to the one in (2),
 which means that (3) $\Rightarrow$ (2) and (4) $\Rightarrow$ (2) hold.
Third, we shall show the implication (2) $\Rightarrow$ (1).
In the case where the cardinality of $X$ or $Y$ is less than or equal to $2$, obviously this implication holds, see Remark~\ref{card.}.
In the other case, we need only to prove that the map $\phi : Y \to X$ as in Proposition~\ref{corresp.} is a homeomorphism.
To investigate that $\phi$ is continuous, fix any point $y \in Y$ and any open neighborhood $U$ of $\phi(y)$ in $X$.
According to Theorem~\ref{ext.}, we can find $d \in \pseudo(X)$ such that $d(\phi(y),x) = 2$ if $x \in X \setminus U$,
 and $d(x,x') = 0$ if $x, x' \in X \setminus U$.
Let
 $$V = \{z \in Y \mid T(d)(y,z) < 1\},$$
 so it is an open neighborhood of $y$ in $Y$.
Then for every $z \in V$, $d(\phi(y),\phi(z)) = T(d)(y,z) < 1$,
 and hence $\phi(z) \in U$.
It follows that $\phi$ is continuous.
Similarly, $\phi^{-1}$ is also continuous.
We conclude that $\phi : Y \to X$ is a homeomorphism.
Finally, we shall investigate that the canonical formula ($\ast$) holds.
When the cardinality of $X$ or $Y$ is less than or equal to $2$,
 it is clear.
When the cardinality of $X$ or $Y$ is greater than $2$,
 we will show the uniqueness of the above homeomorphism $\phi$.
Let $\psi : Y \to X$ be a homeomorphism such that for every $d \in \pseudo(X)$ and for any $x, y \in Y$, $T(d)(x,y) = d(\psi(x),\psi(y))$.
Suppose that $\phi(x) \neq \psi(x)$ for some point $x \in Y$.
Fix any $y \in Y \setminus \{x,\phi^{-1}(\psi(x))\}$ and take a pseudometric $\rho \in \pseudo(X)$ such that $\rho(\phi(x),\phi(y)) = 0$ and $\rho(\psi(x),\psi(y)) = 1$, using Theorem~\ref{ext.}.
Then
 $$0 = \rho(\phi(x),\phi(y)) = d(x,y) = \rho(\psi(x),\psi(y)) = 1,$$
 which is a contradiction.
We complete the proof.
\end{proof}

\end{document}